\documentclass[12pt,reqno]{amsart}%

\usepackage[margin=1in]{geometry}
\usepackage{amsmath,amssymb,amsthm,mathtools}
\usepackage{microtype}
\usepackage{xcolor}
\usepackage[colorlinks=true,linkcolor=blue,citecolor=blue,urlcolor=blue]{hyperref}

\usepackage{amsfonts}
\usepackage{color}
\usepackage{stmaryrd}
\usepackage{graphicx}%
\providecommand{\U}[1]{\protect\rule{.1in}{.1in}}
\newtheorem{theorem}{Theorem}[section]
\theoremstyle{plain}

\newtheorem{lemma}{Lemma}[section]

\newtheorem{remark}{Remark}

\numberwithin{equation}{section}

\newcommand{\R}{\mathbb{R}}

\begin{document}
\title[Sharp constants for weak estimates of the Riesz Potentials ]{Sharp constants for weak estimates of the Riesz Potentials}

\author{Hanli Tang}
\address[Hanli Tang]{Laboratory of Mathematics and Complex Systems (Ministry of Education), School of Mathematical Sciences, Beijing Normal University, Beijing, 100875, China}
\email{hltang@bnu.edu.cn}

\author{Yaojun Wang}
\address[Yaojun Wang]{Mathematisches Institut, Ludwig-Maximilians-Universit\"at M\"unchen,
	Theresienstr. 39, 80333 M\"unchen, Germany}
\email{yaojun.wang@math.lmu.de}

\keywords{Weak type estimate; sharp constants; the Riesz Potentials}
\thanks{The first author
was partly supported by the National Natural Science Foundation of China (Grant No.12471100).}


\begin{abstract}
In this paper we prove the sharp weak estimates for the Riesz potentials
$$\|I_{s}(f)\|_{L^{\frac{n}{n-s},\infty}}\leq \gamma_{n,s} v_n^{\frac{n-s}{n}}\frac{\Gamma(s/2)\Gamma((n+2-s)/2)}{\Gamma(n/2)}\|f\|_{L^1},~~ \text{when}~~0<s<\min\{n,2\}$$
and
$$ \|I_sf\|_{L^{\frac{n}{n-s},\infty}}
 \leq
 \gamma_{n,s}v_n^{\frac{n-s}{n}}\|f\|_{L^1},  ~~\text{when}~~2\leq s<n,$$
where $\gamma_{n,s}=2^{-s}\pi^{-\frac{n}{2}}\frac{\Gamma(\frac{n-s}{2})}{\Gamma(\frac{s}{2})}$ and $v_n$ is the volume of the unit ball. The isoperimetric inequality for the Riesz
capacity and the Newtonian
isocapacitary inequality play a crucial role in our approach.

\end{abstract}
\maketitle
\section{Introduction}
In this paper, we are concerned with the sharp constants of weak estimates for the Riesz Potentials.
The Riesz potentials (fractional integral operator) $I_{s}$, which play an important part in harmonic analysis and partial differential equations, is defined by
$$I_{s}(f)(x)=(-\Delta)^{-s/2}f(x)=\gamma_{n,s}\int_{\mathbb{R}^n}\frac{f(x-y)}{|y|^{n-s}}dy,$$
where $0<s<n$ and $\gamma_{n,s}=2^{-s}\pi^{-\frac{n}{2}}\frac{\Gamma(\frac{n-s}{2})}{\Gamma(\frac{s}{2})}$. Such an operator
was first investigated by Frostman~\cite{F} and systematically studied by Riesz~\cite{R}. The $(L^p,L^q)$-boundedness of
Riesz potential was proved by Hardy and Littlewood~\cite{HL} when $n=1$ and by Sobolev~\cite{S} when $n>1$. The $(L^1,L^{\frac{n}{n-s},\infty})$
-boundedness was obtained by Zygmund~\cite{Z}. More precisely, they established the following theorem.

\vskip0.5cm
\textbf{Theorem A. }\textit{Let} $0<s<n$ \textit{and let}
 $p,q$ \textit{satisfy} $1\leq p<q<\infty$ \textit{and} $\frac{1}{p}-\frac{1}{q}=\frac{s}{n}$, \textit{then when} $p>1$,
$$\|I_{s}(f)\|_{L^q(\mathbb{R}^n)}\leq{C(n,p,s)}\|f\|_{L^{p}(\mathbb{R}^n)}.$$
 \textit{And when }$p=1$,
$$\|I_s(f)\|_{L^{\frac{n}{n-s},\infty}(\mathbb{R}^n)}=\sup_{\lambda>0}\lambda|\{x\in{\mathbb{R}^n}:|I_{s}f|>\lambda\}|^{\frac{n-s}{n}}\leq{C(n,s)\|f\|_{L^{1}(\mathbb{R}^n)}}.$$
\vskip0.5cm
The best constant in the strong $(L^p,L^q)$ estimate when $p=\frac{2n}{n+s}$, $q=\frac{2n}{n-s}$ was precisely calculated by Lieb \cite{L} (see also \cite{FL}), and
Lieb and Loss also obtained an upper bound of $C(n,p,s)$ (see Chapter 4 in \cite{LL}).

Although the best constants of the $(L^{\frac{2n}{n+s}},L^{\frac{2n}{n-s}})$ estimate of the Riesz potentials have been studied for decades,
there are few results about the best constants of the $(L^1,L^{\frac{n}{n-s},\infty})$ estimate for the Riesz potentials. Sometimes the study of sharp constants is highly
challenging; mathematicians first investigate the related dimension-dependent problem.

The dimension-free or dimension-dependent estimates in harmonic analysis have been a recurring theme since the pioneering work of E. M. Stein \cite{St1}, where
he proved the centered Hardy-Littlewood maximal function associated to balls
$$M_bf(x)=\sup_{r>0}\frac{1}{|B(r)|}\int_{B(r)}|f(x-y)|dy$$
admits $L^p$ bounds that are independent of the dimension $n$. Then Stein and Str\"{o}mberg \cite{SS}
proved the dimensional weak $(1,1)$ estimates for $M_b$. More precisely,
$$\|M_bf\|_{L^p(\mathbb{R}^n)}\leq C_p\|f\|_{L^p(\mathbb{R}^n)}~~\text{for}~~~1<p\leq\infty$$
and
\begin{equation}\label{weak HL est}
\|M_bf\|_{L^{1,\infty}(\mathbb{R}^n)}\leq C_1n\|f\|_{L^1(\mathbb{R}^n)},
\end{equation}
where $C_p$ and $C_1$ are independent of $n$, and whether the dependence on the dimension in (\ref{weak HL est}) is optimal remains an open question.

These results motivated extensive dimension-dependent investigations of Hardy-Littlewood maximal operators associated with more general convex bodies. We refer the interested readers to the papers of Bourgain \cite{Bou1, Bou2, Bou3}, Carbery \cite{Ca}, M\"{u}ller \cite{Mu} and Bourgain,
Mirek, Stein and Wr\'{o}bel \cite{BMSW1} and references therein. In particular, Aldaz \cite{A} established that the weak-type
$(1,1)$ bounds for the Hardy-Littlewood maximal operator associated with cubes tend to infinity as the dimension grows. Later, Iakovlev and Str\"{o}mberg \cite{IS} obtained a
quantitative lower bound $cn^{1/4}$ based on Aldaz's result.

The dimension-free or  dimension-dependent estimates for the Hardy-Littlewood maximal operator find a natural counterpart in the theory of the Riesz transform and other operators.
The Riesz transforms $R_j$ are fundamental examples of Calder\'{o}n-Zygmund singular integral operators. In 1952, Calder\'{o}n and Zygmund \cite{CZ} proved that
$$\|R_jf\|_{L^p(\mathbb{R}^n)}\leq C_p(n)\|f\|_{L^p(\mathbb{R}^n)}~~\text{for}~~~1<p<\infty$$
and
\begin{equation}\label{weak 1}
\|R_jf\|_{L^{1,\infty}(\mathbb{R}^n)}\leq C_1(n)\|f\|_{L^1(\mathbb{R}^n)}
\end{equation}
by an interpolation argument and Calder\'{o}n-Zygmund decomposition, which yields that the resulting bounds grow exponentially
with the dimension.

However, Stein \cite{St} first showed that the constants $C_p(n)$ can in fact be chosen independently of the dimension. Later Iwaniec and Martin \cite{IM},  using the method of rotations, proved that the norm of the Riesz transforms is $\cot(\frac{\pi}{2p^{\ast}})$,
which is also the norm of the Hilbert
Transform, i.e.
\begin{equation}\label{bound RT}
 \|R_j\|_{L^{p}(\mathbb{R}^n)\rightarrow L^p(\mathbb{R}^n)} =\cot(\frac{\pi}{2p^{\ast}}),~~~j=1,\cdots,n, ~ \text{where}~~p^{\ast}=\max\{p,p/(p-1)\}.
\end{equation}
A probabilistic proof of (\ref{bound RT}), based on a sharp inequality for orthogonal martingales, was later obtained by Ba\~{n}uelos and Wang \cite{BW}.

The question of whether the constant
$C_1(n)$ in (\ref{weak 1}) can be chosen independently of the dimension was solved by Ouyang, Spector and Stockdale \cite{OSS} recently, while Janakiraman \cite{Ja} (see also \cite{SpSt}) has shown that (\ref{weak 1}) holds with
 $C(n)=c\ln (n+1)$ with some absolute constant $c>0$ .

There is very little research about the best constants of weak $(L^1,L^{\frac{n}{n-s},\infty})$ estimate for the Riesz potentials,
i.e. $$\mathcal{C}_{n,s}=\sup_{0\neq f\in{L^{1}(\mathbb{R}^n)}}\frac{\|I_s(f)\|_{L^{\frac{n}{n-s},\infty}(\mathbb{R}^n)}}{\|f\|_{L^{1}(\mathbb{R}^n)}}.$$

In \cite{T} (see multilinear case in \cite{TW}), the first author set up the following  limiting weak-type behavior for the Riesz potentials
\begin{align*}
\lim_{\lambda\rightarrow{0}}\lambda|\{x\in{\mathbb{R}^n}:|I_{s}f|>\lambda\}|^{\frac{n-s}{n}}=\gamma_{n,s} v_{n}^{\frac{n-s}{n}}\|f\|_{L^{1}(\mathbb{R}^n)}~\text{for}~~ 0\leq f\in L^{1}(\mathbb{R}^n),f\not \equiv0 ,
\end{align*}
which implies
\begin{align*}
\mathcal{C}_{n,s}\geq \gamma_{n,s}v_{n}^{\frac{n-s}{n}}.
\end{align*}

In order to offer upper and lower bounds of $\mathcal{C}_{n,s}$ and  to study the behavior of $\mathcal{C}_{n,s}$ as $s\rightarrow 0$, Huang and the first author \cite{HT} consider the weak estimates of $I_{s}(f)$ under the weak norm $\interleave \cdot \interleave_{L^{\frac{n}{n-s},\infty}}$, which is equivalent to $\|\cdot\|_{L^{\frac{n}{n-s},\infty}}$ and defined by
$$\interleave f \interleave_{L^{\frac{n}{n-s},\infty}(\mathbb{R}^n)}=\sup_{0<|E|<\infty}|E|^{-\frac{1}{r}+\frac{n-s}{n}}\left(\int_E|f|^rdx\right)^{\frac{1}{r}},~~1\leq r<\frac{n}{n-s}.$$
They can establish an identity for the weak type
estimates for the Riesz potentials for $0\leq f\in L^1(\mathbb{R}^n)$ and
$0<r<\frac{n}{n-s}$
\begin{align}\label{identity}
\interleave I_{s}(f) \interleave_{L^{\frac{n}{n-s},\infty}(\mathbb{R}^n)}=\gamma_{n,s} v_{n}^{\frac{n-s}{n}}\left(\frac{n}{n-(n-s)r}\right)^{\frac{1}{r}}\|f\|_{L^1(\mathbb{R}^n)} .
\end{align}
Huang and the first author \cite{HT} also derived the optimal order-dependent estimate.

\vskip0.5cm
\textbf{Theorem B. }\textit{When} $n>2$ \textit{and} $0<s<\frac{n-2}{4}$,
 $$\gamma_{n,s} v_n^{\frac{n-s}{n}}\frac{n-2-4s}{2s(n-2-s)}\leq \mathcal{C}_{n,s}\leq \gamma_{n,s} v_n^{\frac{n-s}{n}}\frac{n }{s}.$$

\vskip0.5cm
Although the above result is optimal when $s\rightarrow 0$ for fixed $n$, these estimates did not determine the
sharp dependence on the dimension or the
explicit value of $\mathcal{C}_{n,s}$,  which will be answered in this paper.

Now let us state our main theorem. We obtain the sharp constants of weak estimates for the Riesz Potentials for all $0<s<n$.

\vskip0.5cm
\begin{theorem}\label{thm:main}
For $0<s<\min\{n,2\}$,
$$\mathcal{C}_{n,s}=\gamma_{n,s} v_n^{\frac{n-s}{n}}\frac{\Gamma(s/2)\Gamma((n+2-s)/2)}{\Gamma(n/2)}.$$
Moreover, the sharp constant can be obtained by the functions
$$c(R^2-|y|^2)^{-s/2}\chi_{B_R}(y)$$
for any $0\neq c\in \mathbb{R}$ and $R>0$. For $2\leq s<n$,
$$\mathcal{C}_{n,s}=\gamma_{n,s} v_n^{\frac{n-s}{n}}.$$
And the sharp constant can be obtained by any nonzero nonnegative $L^1$ function.
\end{theorem}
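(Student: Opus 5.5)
Reduce to $f\ge0$ (since $|I_sf|\le I_s|f|$), normalize $\|f\|_{L^1}=1$, fix $\lambda>0$, and let $E=\{I_sf>\lambda\}$. The bound follows once we show $|E|^{(n-s)/n}\le \lambda^{-1}C$ with the stated constant $C$. Two tools do this: the Riesz capacity $\mathrm{Cap}_s$ and its isoperimetric inequality, which says that among sets of given measure balls minimize the Riesz capacity.

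\textbf{Key steps.}

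1. \emph{Capacity bound.} For compact $K\subset E$, let $\mu_K$ be the $s$-equilibrium measure of $K$, normalized so that $I_s\mu_K\le 1$ everywhere (or quasi-everywhere) and $I_s\mu_K=1$ on $K$, with total mass $\mathrm{Cap}_s(K)$. By Fubini,
$$\lambda\,\mathrm{Cap}_s(K)\le\int_K I_sf\,d\mu_K=\int f\, I_s\mu_K\,dx\le \sup I_s\mu_K\cdot\|f\|_1 .$$
   When $0<s\le 2$, $|y|^{s-n}$ is superharmonic in the appropriate sense, so the maximum principle gives $I_s\mu_K\le1$ everywhere. Hence $\mathrm{Cap}_s(E)\le\lambda^{-1}$.

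2. \emph{Isoperimetric comparison.} By the isoperimetric inequality, $\mathrm{Cap}_s(E)\ge \mathrm{Cap}_s(B)$, where $B$ is a ball with $|B|=|E|$. Scaling gives $\mathrm{Cap}_s(B_r)=r^{n-s}\,\mathrm{Cap}_s(B_1)$.

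3. \emph{Computing the ball capacity.} The equilibrium measure of $B_1$ for $s<2$ is the classical one, $c(1-|y|^2)^{-s/2}\chi_{B_1}$, whose Riesz potential is constant on $B_1$ (Riesz's formula). Computing this constant and the total mass should give
$$\mathrm{Cap}_s(B_1)^{-1}=\gamma_{n,s}\,\frac{\Gamma(s/2)\Gamma((n+2-s)/2)}{\Gamma(n/2)} .$$
   Combining with steps 1 and 2 yields $|E|^{(n-s)/n}\le \lambda^{-1}v_n^{(n-s)/n}\,\mathrm{Cap}_s(B_1)^{-1}$, which is the claimed bound.

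4. \emph{Sharpness for $s<2$.} Take $f=c(1-|y|^2)^{-s/2}\chi_{B_1}$ and $\lambda$ equal to the constant value of $I_sf$ on $B_1$. Since $I_sf$ is strictly smaller outside $B_1$, the set $\{I_sf\ge\lambda\}$ is exactly $B_1$. Taking $\lambda'\uparrow\lambda$ then shows equality is attained.

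\textbf{Case $2\le s<n$.} The lower bound is the limiting result of \cite{T}, as $\lambda\to0$. For the upper bound, write $I_s=I_{s-2}\circ I_2$, or work directly with the superharmonic kernel. The substitute for the capacity argument is the Newtonian isocapacitary inequality: a suitable power of $|x|^{s-n}$ is subharmonic and can be compared with the Newtonian kernel. Alternatively, use the rearrangement inequality $\{I_sf>\lambda\}^*\subset\{I_s f^*>\lambda\}$ (Riesz) and reduce to radial $f$. For radial $f$ with $s\ge2$, the function $r\mapsto |x|^{s-n}*\sigma_r$ (the potential of a normalized sphere measure) satisfies $|x|^{s-n}*\sigma_r\le |x|^{s-n}$ pointwise, because $|y|^{s-n}$ is subharmonic when $s\ge2$. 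Hence $I_sf\le\gamma_{n,s}|x|^{s-n}\|f\|_1$, so $\{I_sf>\lambda\}$ lies in a ball of measure $v_n(\gamma_{n,s}/\lambda)^{n/(n-s)}$. This gives exactly $\mathcal C_{n,s}\le\gamma_{n,s}v_n^{(n-s)/n}$, and the simple subharmonic/mean-value argument may suffice in place of the capacitary inequality. For a general nonnegative $f$, equality is attained in the limit $\lambda\to0$, as recorded in \cite{T}.

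\textbf{Main obstacle.} I expect the hardest parts to be the equilibrium-measure computation in step 3 and the justification of the maximum principle and isoperimetric inequality for Riesz capacity when $s<2$. For the rearrangement step in the case $s\ge2$, I would use the Riesz rearrangement inequality applied to the layer-cake decomposition of the level set.
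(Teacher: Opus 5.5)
\textbf{For $0<s<\min\{2,n\}$.} Your argument has the same architecture as the paper's: bound the capacity of the level set, compare with the ball of equal volume, identify $\operatorname{Cap}_s(B_1)=\|\rho_1\|_{L^1}$, and use $\rho_R$ as the extremal. Two differences are worth noting.
\begin{itemize}
\item You use the equilibrium-measure capacity, so your step~1 needs Frostman's maximum principle. The paper defines $\operatorname{Cap}_s$ through $L^1$ sources, which makes $\operatorname{Cap}_s(E_\lambda)\le\|f\|_{L^1}/\lambda$ trivial.
\item You take the isoperimetric inequality as a black box. That inequality is the real content of this case. The paper proves it in Theorem~\ref{isoperimetric} by truncating the potential, using $\mathcal{E}_s(\min\{I_sf,1\})\le\|f\|_{L^1}$, then the fractional P\'olya--Szeg\H{o} inequality, then the calibration $\mathcal{E}_s(u)=M_R+\mathcal{E}_s(u-e_R)$. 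With a proper citation, this part of your proposal is acceptable.
\end{itemize}

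\textbf{The gap: the upper bound for $2\le s<n$.} Your radial estimate $|x|^{s-n}*\sigma_r\le|x|^{s-n}$ is true, but the reason is that the kernel is \emph{super}harmonic: $\Delta|y|^{s-n}=(s-n)(s-2)|y|^{s-n-2}\le0$. Subharmonicity would give the reverse inequality.

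What actually fails is the reduction to radial $f$. The inclusion $\{I_sf>\lambda\}^*\subset\{I_sf^*>\lambda\}$ does not follow from Riesz's rearrangement inequality, with or without layer cake. For $E=\{I_sf>\lambda\}$, Riesz gives only $\int_E I_sf\le\int_{E^*}I_sf^*$. Combined with your radial bound this yields
\[
\lambda|E|\le\int_{E^*}I_sf^*\,dx\le\gamma_{n,s}\|f\|_{L^1}\int_{E^*}|x|^{s-n}\,dx=\frac{n}{s}\,\gamma_{n,s}v_n^{\frac{n-s}{n}}|E|^{\frac{s}{n}}\|f\|_{L^1}.
\]
That is, you get only $\lambda|E|^{\frac{n-s}{n}}\le\frac ns\gamma_{n,s}v_n^{\frac{n-s}{n}}\|f\|_{L^1}$, the non-sharp bound of Theorem~B.

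No argument resting on Riesz's inequality alone can give the inclusion, because Riesz's inequality holds for every symmetric decreasing kernel, while the pointwise comparison $(K*f)^*\le K*f^*$ does not. Take $K=\chi_{B_1}$ and let $f$ be a sum of $N$ bumps of radius $N^{-1/n}$ placed on a line at spacing $3/2$. Just above the height of one bump, the level set of $K*f$ contains $N-1$ disjoint lenses of fixed volume. The corresponding level set of $K*f^*$ lies in $B_2$.

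\textbf{What is missing.} A correct proof must use $s\ge2$ in an essential way, and your first option does not yet do so.
\begin{itemize}
\item \emph{The paper's route.} The relevant object is not a power of the kernel (that would just be $|x|^{2-n}$) but the power $V=(I_sf)^{\frac{n-2}{n-s}}$ of the potential. $V$ is superharmonic because of the Cauchy--Schwarz bound $|\nabla I_sf|^2\le\frac{n-s}{s-2}\,I_sf\,I_{s-2}f$. The paper then computes $\int(-\Delta V)=n(n-2)v_n(\gamma_{n,s}\|f\|_{L^1})^{\frac{n-2}{n-s}}$ from the asymptotics of $V$. Finally it applies the sharp Newtonian isocapacitary inequality (Lemma~\ref{lem:newtonian-level-set}).
\item \emph{A rescue of your route.} Apply Talenti's comparison argument (coarea formula plus the classical isoperimetric inequality on the level sets of $I_sf$) to $-\Delta(I_sf)=I_{s-2}f$. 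Combine it with the bathtub bound $\int_A I_{s-2}f\le\gamma_{n,s-2}\|f\|_{L^1}\int_{A^*}|x|^{s-2-n}\,dx$; when $s=2$, use $\int_Af\le\|f\|_{L^1}$ instead. Together with $\gamma_{n,s-2}=(n-s)(s-2)\gamma_{n,s}$, this gives $\lambda|\{I_sf>\lambda\}|^{\frac{n-s}{n}}\le\gamma_{n,s}v_n^{\frac{n-s}{n}}\|f\|_{L^1}$ directly, with no rearrangement of $f$ at all.
\end{itemize}
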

\begin{remark}
In fact, we proved that, for $2\leq s<n$, $$\|I_sf\|_{L^{\frac{n}{n-s},\infty}}
 =\gamma_{n,s}v_n^{\frac{n-s}{n}}\|f\|_{L^1}$$
for any nonzero nonnegative $L^1$ function, which is analogous to the identity \eqref{identity} and makes the lower bound $\gamma_{n,s} v_n^{\frac{n-s}{n}}$, obtained by
the first author in \cite{T}, the sharp constant.
\end{remark}
\vskip0.5cm
Let us briefly describe the main ideas and difficulties of the proof.
For $0<s<\min\{2,n\}$, let $f>0$ and $E_\lambda=\{I_sf>\lambda\}$, then the
definition of the Riesz capacity gives
$\operatorname{Cap}_s(E_\lambda)\leq\frac{\|f\|_{L^1}}{\lambda}$ (see the definition of the Riesz
capacity in Section 2.2).
The sharp weak estimate therefore follows from the isocapacitary
inequality and the explicit capacity of a ball. The key point in
proving the isocapacitary inequality is to rearrange the truncated
potential $\min\{I_sf,1\}$ of an admissible source, rather than the
source itself. The truncation estimate, together with the fractional
P\'olya--Szeg\H{o} inequality, reduces the problem to the equilibrium
potential of a ball.

At $s=2$, truncation and the classical P\'olya--Szeg\H{o} inequality
remain valid for the Dirichlet energy. However, the explicit density
$(R^2-|y|^2)^{-\frac{s}{2}}\chi_{B_R}(y)$ used in the preceding argument
is no longer integrable, so that argument cannot be extended to the
endpoint without modification. For $s>2$, the same energy-decreasing
truncation and rearrangement argument is no longer directly available.
We treat the entire range $2\leq s<n$ by a different argument based on
Newtonian capacity. For nonnegative smooth compactly supported $f$,
we show that
$V=(I_sf)^{\frac{n-2}{n-s}}$
is superharmonic and compute the total mass of $-\Delta V$ from its
behavior at infinity. The capacitary weak-type estimate and the sharp
Newtonian isocapacitary inequality then yield the upper bound. Thus the limiting weak-type formula in \cite{T} shows that every
nonzero nonnegative $L^1$ function is a maximizer.

This paper is organized as follows. Section 2 is devoted to the sharp constant for the weak estimates of the Riesz potentials when $0<s<\min\{2,n\}$. In Section
3, we will give the sharp constant for $2\leq s<n$. In Section 4, we present the proof of an asymptotically optimal estimate for $\mathcal{C}_{n,s}$
via the sharp Sobolev inequality. This estimate was the starting point of the project.

\section*{Acknowledgements}
The initial aim of this project was to obtain asymptotically optimal weak estimates for Riesz potentials and improve the result of L. Huang and the first author in \cite{HT}. The first author found an approach using Sobolev inequalities that gives a better upper bound $C\gamma_{n,s}v_n^{\frac{n-s}{n}}\frac{n^{1-s/2}}{s}$ than that in \cite{HT} (see Appendix), and then consulted AI tools to inquire whether this upper bound might be optimal. The AI tools suggested a test function $A_{n,s}(1-|y|^2)^{-s/2}\chi_{B_1}(y)$, noting that it is the density of the equilibrium measure for balls with respect to the Riesz capacity. This observation made the first author realize that a stronger result could be achieved by establishing an isoperimetric inequality for the Riesz capacity. After the first version of the paper was posted on arXiv, the second author provided a method to obtain the sharp constant for weak estimates of $I_s$ for $2\leq s<n$, so we wrote the second version of the paper. We acknowledge the use of AI tools during the exploratory stage of this project; all mathematical arguments and proofs in the final manuscript were checked and written by ourselves. The first author also thanks X. Fu and J. Li for valuable discussions. The second author acknowledges support from
the China Scholarship Council and Ludwig-Maximilians-Universit\"{a}t  M\"{u}nchen.

\section{\texorpdfstring{Sharp weak estimates for the Riesz potentials when $0<s<\min\{2,n\}$}{Sharp weak estimates for the Riesz potentials when 0<s<min(2,n)}}
\subsection{The lower bound}
In this section, we provide the lower bound for $\mathcal{C}_{n,s}$ when $0<s<\min\{2,n\}$. In fact we prove that the functions
\begin{equation*}
 \rho_R(y)=A_{n,s}(R^2-|y|^2)^{-s/2}\chi_{B_R}(y),
 \qquad
 A_{n,s}=\frac{\sin(\pi s/2)\Gamma(n/2)}
 {\gamma_{n,s}\pi^{1+n/2}}
\end{equation*}
satisfy
$$\|I_{s}\rho_R\|_{L^{\frac{n}{n-s},\infty}}\geq \gamma_{n,s} v_n^{\frac{n-s}{n}}\frac{\Gamma(s/2)\Gamma((n+2-s)/2)}{\Gamma(n/2)}\|\rho_R\|_{L^1}.$$
First we need the following lemma, which in fact is a classical theorem about explicit equilibrium measures for balls in Riesz
potential theory, see \cite{La}. For the completeness of the paper, we provide a proof with details.
\vskip0.5cm
\begin{lemma}
\label{lem:ball-potential}
For every $R>0$,
\begin{equation}
 I_s\rho_R(x)=1
 \qquad\text{when }|x|\leq R.
\label{eq:ball-potential-one}
\end{equation}
\end{lemma}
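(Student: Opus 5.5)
By the scaling $y\mapsto Ry$ we have $\rho_R(y)=R^{-s}\rho_1(y/R)$ and $I_s\rho_R(x)=I_s\rho_1(x/R)$, since $I_s$ is homogeneous of degree $-s$ under dilations. So it suffices to treat $R=1$. I would split the proof into two steps. First show that $I_s\rho_1$ is constant on the closed unit ball. Then evaluate that constant at $x=0$.

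\emph{The constant.} At $x=0$, polar coordinates and the substitution $t=r^2$ give
\[
\int_{B_1}|y|^{s-n}(1-|y|^2)^{-s/2}\,dy=\frac{2\pi^{n/2}}{\Gamma(n/2)}\cdot\frac12 B\!\left(\tfrac s2,1-\tfrac s2\right)=\frac{\pi^{n/2}}{\Gamma(n/2)}\cdot\frac{\pi}{\sin(\pi s/2)},
\]
using $\Gamma(s/2)\Gamma(1-s/2)=\pi/\sin(\pi s/2)$. Multiplying by $\gamma_{n,s}A_{n,s}$ gives exactly $1$, which is how $A_{n,s}$ was chosen. Note that $s<2$ is needed here for the integrability of $(1-r^2)^{-s/2}$.

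\emph{Constancy inside the ball.} I would use the Fourier transform. The function $(1-|y|^2)_+^{-s/2}$ is radial, and the classical Sonine--Bessel formula gives
\[
\widehat{(1-|y|^2)_+^{-s/2}}(\xi)=c_{n,s}\,|\xi|^{-\nu}J_{\nu}(|\xi|),\qquad \nu=\frac{n-s}{2}.
\]
Since $\widehat{I_s f}=|\xi|^{-s}\hat f$ (up to the $2\pi$ convention), we get
\[
I_s\rho_1(x)=c\int_{\R^n}|\xi|^{-s-\nu}J_\nu(|\xi|)e^{ix\cdot\xi}\,d\xi .
\]
Integrating over spheres with the Bessel form of the spherical Fourier transform, $\int_{S^{n-1}}e^{ia t\,\omega_1}d\omega\propto (at)^{1-n/2}J_{n/2-1}(at)$ with $a=|x|$, yields
\[
I_s\rho_1(x)=c'\,a^{1-n/2}\int_0^\infty J_\nu(t)\,J_{\mu}(at)\,t^{-\lambda}\,dt,\qquad \mu=\frac n2-1,\ \lambda=\frac s2 .
\]
The power of $t$ comes from $-s-\nu+(1-n/2)+(n-1)=-s/2$.

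This is a discontinuous Weber--Schafheitlin integral. For $0<a<1$ it equals
\[
C\,a^{\mu}\,{}_2F_1\!\left(\tfrac{\nu+\mu-\lambda+1}{2},\tfrac{\mu-\nu-\lambda+1}{2};\mu+1;a^2\right).
\]
Here the second parameter is $\tfrac12\bigl(\tfrac n2-1-\tfrac{n-s}2-\tfrac s2+1\bigr)=0$, so the hypergeometric factor is identically $1$. The factor $a^{\mu}$ then cancels $a^{1-n/2}$, and $I_s\rho_1$ is constant for $|x|<1$. The case $|x|=1$ follows by continuity of $I_s\rho_1$. This continuity holds because $\rho_1\in L^p$ for some $p>n/s$ near the boundary and is bounded elsewhere, so the potential is continuous.

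\emph{Main obstacle.} The delicate step is justifying the Fourier computation. The function $\rho_1$ is only in $L^1$, and the resulting Bessel integral converges only conditionally at infinity, since $J_\nu J_\mu t^{-s/2}$ decays like $t^{-1-s/2}$ with oscillation. The Weber--Schafheitlin formula also requires $\nu+\mu-\lambda+1>0$ and $\lambda>-1$, which hold here. I would first insert a Gaussian damping factor $e^{-\varepsilon|\xi|^2}$, which amounts to replacing $\rho_1$ by a heat-regularized version. I would then carry out the computation and let $\varepsilon\to0$, using dominated convergence on the potential side and Abel summability of the Bessel integral on the Fourier side.

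If this becomes too technical, the fallback is Riesz's classical geometric proof. It uses a Kelvin-type inversion centered at $x$ that maps the ball to itself or its exterior, and this reduces the constancy to the $x=0$ computation above.
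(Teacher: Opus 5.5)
\textbf{Gap: the boundary case $|x|=1$.} Your argument for the open ball works, but the boundary case is not established. The continuity of $I_s\rho_1$ that you invoke rests on a false claim. Near $\partial B_1$, $(1-|y|^2)^{-ps/2}$ is integrable iff $ps/2<1$, so $\rho_1\in L^p$ exactly for $p<2/s$. Such a $p$ can exceed $n/s$ only when $n=1$.

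For $n\geq2$ this breaks your argument in three ways:
\begin{itemize}
\item the H\"older argument needs $|x-y|^{s-n}\in L^{p'}_{\mathrm{loc}}$, i.e.\ $p'<n/(n-s)$, i.e.\ $p>n/s$, so it is unavailable;
\item Fatou's lemma only gives lower semicontinuity, hence only $I_s\rho_1\leq1$ on $\partial B_1$;
\item letting $\varepsilon\to0$ in your damped Fourier representation at a boundary point presupposes the very continuity at issue.
\end{itemize}

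The conclusion is true, and there are two ways to prove it. One is a finer estimate exploiting that the singularity of $\rho_1$ lies on a hypersurface. The simpler one is the direct computation the paper uses. For $|x_0|=1$ put $y=x_0-r\theta$ and $\tau=x_0\cdot\theta$. Then $y\in B_1$ iff $0<r<2\tau$, and $1-|y|^2=r(2\tau-r)$, so the integrand becomes $r^{s/2-1}(2\tau-r)^{-s/2}$. Each radial integral equals $B(s/2,1-s/2)$ independently of $\tau$. The total is $\frac{|\mathbb{S}^{n-1}|}{2}B(s/2,1-s/2)$, the same value as at the origin.

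\textbf{Comparison for $0<|x|<1$.} Here your route is correct and genuinely different from the paper's. I checked the exponent $-s/2$, the vanishing of the second hypergeometric parameter, and the cancellation of $a^{\mu}$ against $a^{1-n/2}$. The Gaussian regularization is legitimate here, because $\rho_1$ is bounded near interior points, so $I_s\rho_1$ is continuous in the open ball.

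The paper instead runs the Kelvin-inversion argument you list as a fallback. Inversion in the sphere centered at $x/|x|^2$ with squared radius $(1-|x|^2)/|x|^2$ maps $B_1$ onto itself and $0$ to $x$. This turns the potential into $(1-|x|^2)^{s/2}\int_{B_1}|z|^{s-n}(1-|z|^2)^{-s/2}D_x(z)^{-n/2}dz$, with $D_x(z)=1-2x\cdot z+|x|^2|z|^2$. The Poisson-kernel identity for the spherical mean of $D_x^{-n/2}$ and Euler's integral with ${}_2F_1(1,s/2;1;r^2)=(1-r^2)^{-s/2}$ then give the constant directly.

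The trade-off:
\begin{itemize}
\item The paper's route is elementary and uses only absolutely convergent integrals.
\item Yours needs Sonine's and the Weber--Schafheitlin formulas plus a regularization. In return it explains the constancy structurally, through a degenerate hypergeometric factor, and the same formula for $|x|>1$ yields the exterior profile.
\end{itemize}

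A small correction: after the angular integration the Bessel integral converges absolutely, since $J_\nu(t)J_\mu(at)t^{-s/2}=O(t^{-1-s/2})$. Only the $n$-dimensional Fourier integral fails to converge absolutely. So dominated convergence suffices as $\varepsilon\to0$, and no Abel summation is needed.
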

\vskip0.5cm
\begin{proof}
It
suffices to prove the lemma for $R=1$ since \eqref{eq:ball-potential-one} is invariant under the scaling $x=Ra$, $y=Rz$. Denote
\begin{equation*}
 J(a)=\int_{B_1}|a-y|^{s-n}(1-|y|^2)^{-s/2}dy.
\end{equation*}
First we assume $0<|a|<1$.  Let $\Phi_a$ be inversion in the sphere centered
at $a/|a|^2$ with squared radius $(1-|a|^2)/|a|^2$. By the change of variables $y=\Phi_a(z)$ we have
\begin{equation*}
 J(a)=(1-|a|^2)^{s/2}
 \int_{B_1}|z|^{s-n}(1-|z|^2)^{-s/2}D_a(z)^{-n/2}d z,
\end{equation*}
where
\begin{equation*}
 D_a(z)=1-2a\cdot z+|a|^2|z|^2.
\end{equation*}
By rotation invariance we may assume $a=re_1$.  In polar coordinates
$z=t\omega$, the Poisson-kernel identity yields
\begin{equation*}
 \int_{ \mathbb{S}^{n-1}}D_a(t\omega)^{-n/2}d\omega
 =\frac{|\mathbb{S}^{n-1}|}{1-r^2t^2}.
\end{equation*}
Consequently,
\begin{align}
 J(a)
 &=(1-r^2)^{s/2}|\mathbb S^{n-1}|
 \int_0^1\frac{t^{s-1}(1-t^2)^{-s/2}}{1-r^2t^2}d t
 \notag\\
 &=\frac{|\mathbb S^{n-1}|}{2}B(s/2,1-s/2)
 \notag\\
 &=\frac{\pi^{1+n/2}}
 {\Gamma(n/2)\sin(\pi s/2)},
\label{eq:constant-bare-potential}
\end{align}
where for the second equality we use Euler's integral representation of
${}_2F_1$ and the identity
${}_2F_1(1,s/2;1;r^2)=(1-r^2)^{-s/2}$. The case $a=0$ follows from the same
beta integral.

It remains to prove the case when $|a|=1$.  Fix $a\in\partial B_1$ and let
$y=a-r\theta$.  If $\tau=a\cdot\theta$, then $y\in B_1$ is equivalent to
 $0<r<2\tau$. Thus
\begin{align*}
 J(a)
 &=\int_{\{\theta:a\cdot\theta>0\}}
 \int_0^{2\tau}r^{s/2-1}(2\tau-r)^{-s/2}d rd\theta
 \notag\\
 &=\frac{|\mathbb S^{n-1}|}{2}B(s/2,1-s/2),
\end{align*}
which agrees with \eqref{eq:constant-bare-potential}.  Thus
$\gamma_{n,s}A_{n,s}J(a)=1$ for all $|a|\leq1$, which completes the proof of Lemma \ref{lem:ball-potential}.
\end{proof}
Now by Lemma \ref{lem:ball-potential} and the fact
\begin{equation*}
 \int_{B_R}(R^2-|y|^2)^{-s/2}d y
 =\frac{\pi^{n/2}\Gamma(1-s/2)}
 {\Gamma((n-s+2)/2)}R^{n-s},
\end{equation*}
we have
$$\mathcal{C}_{n,s}\geq\frac{\|I_{s}\rho_R\|_{L^{\frac{n}{n-s},\infty}}}{\|\rho_R\|_{L^1}}\geq \frac{|B_R|^{\frac{n-s}{n}}}{\|\rho_R\|_{L^1}}= \gamma_{n,s} v_n^{\frac{n-s}{n}}\frac{\Gamma(s/2)\Gamma((n+2-s)/2)}{\Gamma(n/2)}.$$

\subsection{An isoperimetric inequality for the Riesz
capacity}
In this subsection, we set up the isoperimetric inequality for the Riesz
capacity which plays a crucial role in the proof of our Main Theorem when $0<s<\min\{2,n\}$.

For any set
$E\subset\mathbb{R}^n$, define the Riesz capacity
\begin{equation*}
 \begin{aligned}
 \operatorname{Cap}_s(E)
 :=\inf\bigl\{\|f\|_{L^1(\mathbb{R}^n)}:\;&
 f\in L^1(\mathbb{R}^n),\ f\geq0, I_sf\geq1 \textit{ on } E\bigr\},
 \end{aligned}
 \label{eq:function-source-capacity-intro}
\end{equation*}
which has been systematically studied in the literature (see, for instance, \cite{AH}). The Riesz capacity has found wide applications and we refer the
interested readers to the papers \cite{OP} \cite{MV} \cite{NR} and \cite{JLSX} and references  therein. It is easy to check that
$$\operatorname{Cap}_s(rE)=r^{n-s}\operatorname{Cap}_s(E).$$

Set
\begin{equation*}
 \kappa_{n,s} =\frac{s2^{s-1}\Gamma((n+s)/2)}
 {\pi^{n/2}\Gamma(1-s/2)}.
\end{equation*}
For $u,v\in\dot H^{s/2}(\mathbb{R}^n)$, define
$$\begin{aligned}
\mathcal{E}_s(u,v)&=(2\pi)^{s}\int_{\mathbb{R}^n}|\xi|^s
 \widehat u(\xi)\overline{\widehat v(\xi)}d\xi\\
&=\frac{\kappa_{n,s}}{2}\iint_{\mathbb{R}^n\times\mathbb{R}^n}\frac{(u(x)-u(y))\overline{(v(x)-v(y))}}{|x-y|^{n+s}}dxdy,
\end{aligned}$$
 where $\widehat u(\xi)$ is the Fourier transform of $u$ definded by
$$\widehat u(\xi)=\int_{\mathbb{R}^n}
\exp^{-2\pi i x\cdot\xi}u(x)dx.$$

We write $\mathcal{E}_s(u)=\mathcal{E}_s(u,u)$ and the sharp fractional Sobolev inequality gives
$$\|u\|^2_{L^{\frac{2n}{n-s}}}\leq \mathcal{S}_{n,s}\mathcal{E}_s(u),$$
with
$$\mathcal{S}_{n,s}=\frac{\Gamma(\frac{n-s}{2})}{2^s\pi^{s/2}\Gamma(\frac{n+s}{2})}\left(\frac{\Gamma(n)}{\Gamma(n/2)}\right)^{s/n}.$$
Our isoperimetric inequality for the Riesz
capacity is the following.
\vskip0.5cm
\begin{theorem}\label{isoperimetric}
Let $0<s<\min\{2,n\}$ and let $E\subset \mathbb{R}^n$ be measurable with  $0<|E|<\infty$. If $B_R$ is a ball
such that $|B_R|=|E|$, then
$$\operatorname{Cap}_s(E)\geq \operatorname{Cap}_s(B_R).$$
Moreover,
$$\operatorname{Cap}_s(B_R)=\frac{\Gamma(n/2)}{\gamma_{n,s}\Gamma(s/2)\Gamma((n-s+2)/2)}R^{n-s}.$$
\end{theorem}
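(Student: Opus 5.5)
The plan has two parts: compute $\operatorname{Cap}_s(B_R)$ exactly, then prove the isoperimetric inequality by truncation and rearrangement of the potential, as outlined in the introduction.

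\textbf{Capacity of the ball.} Lemma \ref{lem:ball-potential} shows that $\rho_R$ is admissible for $B_R$. Its mass, computed in the previous subsection, therefore gives the upper bound
$$\operatorname{Cap}_s(B_R)\le \|\rho_R\|_{L^1}=A_{n,s}\frac{\pi^{n/2}\Gamma(1-s/2)}{\Gamma((n-s+2)/2)}R^{n-s}.$$
Using $\sin(\pi s/2)\Gamma(1-s/2)=\pi/\Gamma(s/2)$, this equals the claimed value. For the reverse inequality, take any admissible $f\ge0$ with $I_sf\ge1$ on $B_R$. Since $I_s$ is symmetric, Fubini gives
$$\|f\|_{L^1}\ge\int f\,I_s\rho_R=\int \rho_R\,I_sf\ge\int\rho_R=\|\rho_R\|_{L^1}.$$
The first inequality needs $I_s\rho_R\le1$ everywhere, not only on $B_R$. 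I would get this from the maximum principle for $s$-harmonic functions when $s<2$: $I_s\rho_R$ is $s$-harmonic off $\overline{B_R}$, equals $1$ on $B_R$ and tends to $0$ at infinity. Alternatively, the Kelvin-transform representation of the exterior potential gives it directly.

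\textbf{Isoperimetric inequality.} Take $f\ge0$ admissible for $E$ with $\|f\|_{L^1}$ close to $\operatorname{Cap}_s(E)$; by the usual approximation we may assume $f$ is smooth with compact support. Let $u=I_sf$ and $w=\min\{u,1\}$.
\begin{enumerate}
\item Since $w=1$ on $E$ up to a null set, $(-\Delta)^{s/2}u=f$ and truncation is a normal contraction, we get
$$\mathcal{E}_s(w)\le\mathcal{E}_s(w,u)=\int wf\le\|f\|_{L^1}.$$
\item Let $w^*$ be the symmetric decreasing rearrangement of $w$. The fractional P\'olya--Szeg\H{o} inequality gives $\mathcal{E}_s(w^*)\le\mathcal{E}_s(w)$. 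Moreover $w^*=1$ on a ball $B_{R'}$ with $|B_{R'}|\ge|E|$, so $R'\ge R$, and $w^*\to0$ at infinity.
\item It remains to show that $\mathcal{E}_s(v)\ge\operatorname{Cap}_s(B_R)$ for every $v\in\dot H^{s/2}$ with $v\ge1$ on $B_R$. Put $\phi=I_s\rho_R$, so that $\phi=1$ on $B_R$ and $(-\Delta)^{s/2}\phi=\rho_R$ is supported in $B_R$. Then
$$\mathcal{E}_s(v,\phi)=\int v\rho_R\ge\|\rho_R\|_{L^1}=\mathcal{E}_s(\phi)=\int\phi\rho_R.$$
Cauchy--Schwarz then gives $\mathcal{E}_s(v)\ge\mathcal{E}_s(\phi)=\|\rho_R\|_{L^1}=\operatorname{Cap}_s(B_R)$.
\end{enumerate}
Combining the three steps, $\|f\|_{L^1}\ge\mathcal{E}_s(w^*)\ge\operatorname{Cap}_s(B_{R'})\ge\operatorname{Cap}_s(B_R)$, using the scaling law in the last step. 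Letting $\|f\|_{L^1}\downarrow\operatorname{Cap}_s(E)$ proves the theorem.

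\textbf{Main obstacle.} I expect step 1 to be the hardest part. It requires a rigorous justification of
$$\mathcal{E}_s(\min\{u,1\},u)\ge\mathcal{E}_s(\min\{u,1\})$$
and of the pairing identity with $f$, since $u$ itself need not lie in $\dot H^{s/2}$ when $s$ is small relative to $n$. I would prove the inequality pointwise from the double-integral form of $\mathcal{E}_s$, using $(a-b)(\min(a,1)-\min(b,1))\ge(\min(a,1)-\min(b,1))^2$. The identity $\mathcal{E}_s(w,u)=\int wf$ I would justify by approximating $w$ by test functions. A secondary technical point is the pairing in step 3, which needs $\phi\in\dot H^{s/2}$; this holds because $\rho_R\in L^1$, $\rho_R\ge0$, $s<2$ and $\int\rho_R\phi<\infty$.
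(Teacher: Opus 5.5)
Your isoperimetric argument is the paper's: truncate the potential to $w=\min\{I_sf,1\}$, use $\mathcal{E}_s(w)\le\mathcal{E}_s(w,u)=\int wf\le\|f\|_{L^1}$, apply fractional P\'olya--Szeg\H{o}, and calibrate against $e_R=I_s\rho_R$. Your Cauchy--Schwarz step is equivalent to the paper's identity $\mathcal{E}_s(u)=M_R+\mathcal{E}_s(u-e_R)$.

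The real difference is how you get $\operatorname{Cap}_s(B_R)\ge\|\rho_R\|_{L^1}$.
\begin{itemize}
\item You use Frostman duality, $\|f\|_{L^1}\ge\int f\,I_s\rho_R=\int\rho_R\,I_sf\ge\|\rho_R\|_{L^1}$. This needs $I_s\rho_R\le1$ on all of $\mathbb{R}^n$, via the maximum principle for $s\le2$ or the explicit exterior formula.
\item The paper instead runs your steps 1 and 3 with $E=B_R$, with no rearrangement: $\|g\|_{L^1}\ge\mathcal{E}_s(\min\{I_sg,1\})\ge M_R$. It therefore only uses $I_s\rho_R=1$ on $\overline{B_R}$ and needs nothing outside the ball.
\end{itemize}
In your write-up the maximum-principle argument is redundant, because your step 3 already proves $\mathcal{E}_s(v)\ge\|\rho_R\|_{L^1}$ without knowing the ball's capacity. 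Your route does give a one-line positivity proof of the ball capacity, independent of the energy machinery. The cost is an extra potential-theoretic fact, which holds here because $s<2$.

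There are three technical points to fix.
\begin{itemize}
\item In step 3 the delicate issue is not $\phi\in\dot H^{s/2}$ but the identity $\mathcal{E}_s(v,\phi)=\int v\rho_R$. For $n\ge3$ and $n/(n-1)\le s<2$ one has $\rho_R\notin L^{2n/(n+s)}$, so the Lebesgue integral is not automatically the $\dot H^{-s/2}$--$\dot H^{s/2}$ pairing.
\begin{itemize}
\item The paper handles this by approximating $\rho_R$ with $\rho_m=\rho_R\chi_{B_{R(1-1/m)}}$. It shows $\|\rho_R-\rho_m\|_{\dot H^{-s/2}}^2\le\|\rho_R-\rho_m\|_{L^1}\to0$, using $I_s(\rho_R-\rho_m)\le1$ on $B_R$.
\item Since your $v=w^\ast$ takes values in $[0,1]$, you could instead approximate $v$ by uniformly bounded test functions and use dominated convergence.
\end{itemize}
\item Saying ``we may assume $f$ smooth with compact support'' does not literally preserve $I_sf\ge1$ on $E$. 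It is cleaner to prove the truncation bound for every $0\le f\in L^1$, as the paper does: take $f_j=\min\{f,j\}\chi_{B_j}\uparrow f$ and pass to the limit with Fatou on the Gagliardo form.
\item Your worry that $u\notin\dot H^{s/2}$ is unfounded for bounded compactly supported $f$: $\int|\xi|^{-s}|\widehat f(\xi)|^2\,d\xi<\infty$ when $s<n$.
\end{itemize}
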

\vskip0.5cm
To prove the theorem, we need three lemmas. For $t\geq 0$, let
\begin{equation*}
 T(t)=\min\{t,1\}.
\end{equation*}
Our first lemma states
\begin{lemma}\label{lem:truncation}
If $f\in L^1(\mathbb{R}^n)$, $f\geq 0$ and $u=I_sf$, then
\begin{equation*}
 T(u)\in\dot H^{s/2}(\mathbb{R}^n),
 \qquad
 \mathcal{E}_s(T(u))\leq\|f\|_{L^1(\mathbb{R}^n)}.
\end{equation*}
\end{lemma}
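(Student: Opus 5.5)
We will first handle $f\ge0$ smooth with compact support (or bounded with compact support), and then pass to general $f\in L^1$ by approximation. The core is the identity
\[
\mathcal{E}_s(u,\varphi)=\int_{\mathbb{R}^n} f\,\varphi\,dx
\]
for $u=I_sf$ and suitable test functions $\varphi$, which expresses $(-\Delta)^{s/2}u=f$. Formally this is Plancherel: $\widehat{I_sf}(\xi)=(2\pi|\xi|)^{-s}\widehat f(\xi)$, so $(2\pi)^s|\xi|^s\widehat u\,\overline{\widehat\varphi}=\widehat f\,\overline{\widehat\varphi}$. Note that $u$ itself need not lie in $\dot H^{s/2}$; this happens exactly when $\int f\,I_sf<\infty$. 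We will therefore use the double-integral form of $\mathcal{E}_s$, which makes sense for $T(u)$ directly.

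\textbf{Step 1: pointwise comparison.} Since $T$ is nondecreasing and $1$-Lipschitz with $T(0)=0$, for all $a,b\ge0$
\[
(T(a)-T(b))^2\le (T(a)-T(b))(a-b).
\]
Plugging this into the double-integral representation gives
\[
\mathcal{E}_s(T(u))\le \frac{\kappa_{n,s}}{2}\iint\frac{(T(u(x))-T(u(y)))(u(x)-u(y))}{|x-y|^{n+s}}\,dx\,dy =: \mathcal{E}_s(u,T(u)),
\]
where the right-hand side is a nonnegative integral, interpreted pointwise.

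\textbf{Step 2: evaluate $\mathcal{E}_s(u,T(u))=\int f\,T(u)$.} Then $\int f\,T(u)\le \|f\|_{L^1}$ because $T(u)\le1$, which gives the estimate. To justify Step 2 for nice $f$, note the following.
\begin{itemize}
\item $u$ is smooth and behaves like $c|x|^{s-n}$ at infinity.
\item $T(u)$ is bounded and Lipschitz, and equals $u$ outside a compact set where $u<1$.
\item $T(u)\in L^2$ iff $2(n-s)>n$. In low dimensions this may fail, so we will not use Plancherel directly.
\end{itemize}
Instead we use the pointwise formula
\[
(-\Delta)^{s/2}u(x)=\kappa_{n,s}\,\mathrm{p.v.}\!\int\frac{u(x)-u(y)}{|x-y|^{n+s}}\,dy=f(x),
\]
valid for such $u$ since $0<s<2$ and $u$ is smooth with decay. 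Multiply by $T(u(x))$, integrate, and symmetrize. Symmetrization is legitimate because $(T(u(x))-T(u(y)))(u(x)-u(y))\ge0$, so one can truncate the kernel to $|x-y|>\varepsilon$ and let $\varepsilon\to0$ by monotone convergence, after checking the $\varepsilon$-truncated integrals converge absolutely. Absolute convergence uses the decay $|u|\lesssim (1+|x|)^{s-n}$ together with a smooth-region Taylor bound for $|x-y|<1$. This step is the main technical obstacle; a cleaner alternative is to cut off with $T(u)\eta_R$ and send $R\to\infty$.

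\textbf{Step 3: general $f$.} For general $f\ge0$ in $L^1$, take nice $f_k\uparrow f$ (or $f_k\to f$ in $L^1$). Then $I_sf_k\to I_sf$ a.e. (along a subsequence, by the weak-type bound), so $T(u_k)\to T(u)$ a.e. Fatou's lemma in the double integral gives $\mathcal{E}_s(T(u))\le\liminf\mathcal{E}_s(T(u_k))\le\|f\|_{L^1}$. Finiteness of the energy, together with $T(u)$ vanishing at infinity in measure (since $u\in L^{n/(n-s),\infty}$), places $T(u)\in\dot H^{s/2}$.
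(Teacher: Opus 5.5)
Steps 1 and 3 are fine, and your overall structure is the paper's: the scalar inequality $(T(a)-T(b))^2\le (T(a)-T(b))(a-b)$, the identity $\mathcal{E}_s(u,T(u))=\int fT(u)$ for nice $f$, and approximation. The gap is in Step 2. It is the only nontrivial step, you leave it open, and the justification you sketch fails in part of the admissible range.

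\textbf{Why your Step 2 does not go through.} Symmetrizing $\int T(u)\,L_\varepsilon u$ needs Fubini for $|u(x)-u(y)|\,T(u(x))\,|x-y|^{-n-s}$ on $\{|x-y|>\varepsilon\}$, and this integrability can fail. Take $n=2$ and $3/2\le s<2$, which is allowed since $s<\min\{2,n\}$. At infinity $T(u)\sim c|x|^{s-2}$ and $|\nabla u|\sim c|x|^{s-3}$. The shell $1/2<|x-y|<1$ alone then gives
\[
\iint_{|x-y|>\varepsilon}\frac{|u(x)-u(y)|\,T(u(x))}{|x-y|^{n+s}}\,dx\,dy\ \gtrsim\ \int_{|x|>R_0}|x|^{2s-5}\,dx=\infty .
\]
So you cannot swap $x$ and $y$. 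The check you propose is weaker still. Using only the decay of $u$ on $|x-y|\ge 1$ and $|u(x)-u(y)|\le u(x)+u(y)$ produces a multiple of $\int u\,T(u)$. That is infinite whenever $n\le 2s$, which is the same obstruction as the $T(u)\notin L^2$ issue you noticed.

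The cutoff variant with $T(u)\eta_R$ is fine for fixed $R$. But letting $R\to\infty$ leaves the commutator term $\iint (u(x)-u(y))\,T(u(y))\,(\eta_R(x)-\eta_R(y))\,|x-y|^{-n-s}\,dx\,dy$. The natural way to control it is Cauchy--Schwarz against $\mathcal{E}_s(u)^{1/2}$, which is exactly the quantity you set aside.

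\textbf{The missing observation.} For bounded compactly supported $f$, the potential $u$ itself does lie in $\dot H^{s/2}$:
\[
\mathcal{E}_s(u)=(2\pi)^{-s}\int|\xi|^{-s}|\widehat f|^2\,d\xi=\int f\,I_sf\le\|f\|_{L^1}\|I_sf\|_{L^\infty}<\infty .
\]
Your remark that $u\notin\dot H^{s/2}$ in general only concerns rough $f$, and Step 3 handles those anyway. The argument then runs as follows:
\begin{enumerate}
\item The identity $\mathcal{E}_s(u,\varphi)=\int f\varphi$ holds for $\varphi\in C_c^\infty$ by the Fourier transform.
\item It extends by density to every $\varphi\in\dot H^{s/2}$. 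This is because $\varphi\mapsto\int f\varphi$ is continuous there: $f\in L^{2n/(n+s)}$ and $\dot H^{s/2}\hookrightarrow L^{2n/(n-s)}$.
\item Since $T$ is $1$-Lipschitz, $\mathcal{E}_s(T(u))\le\mathcal{E}_s(u)<\infty$, so $\varphi=T(u)$ is admissible and Step 2 follows at once.
\end{enumerate}
This is the paper's argument. It needs neither $T(u)\in L^2$, nor the pointwise principal-value formula, nor any symmetrization, because everything happens in the Hilbert space $\dot H^{s/2}$. For $s<n$ that space consists of functions in $L^{2n/(n-s)}$, so Plancherel-type identities there do not require $L^2$ integrability.
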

\begin{proof}

Suppose first that $f\geq0$ is bounded and compactly supported. Since $f\in L^1\cap L^2$,
\begin{equation*}
 \mathcal{E}_s(u)
 =(2\pi)^{-s}\int_{\mathbb{R}^n}|\xi|^{-s}|\widehat f(\xi)|^2d\xi<\infty.
\end{equation*}
Indeed, the integral near the origin can be controlled by the fact $\widehat f(\xi)$ is bounded and $s<n$, while the integral on the region $|\xi|>1$ can be controlled by Plancherel's theorem. Thus $u\in\dot H^{s/2}(\mathbb{R}^n)$ and for any $\phi\in C_{c}^{\infty}(\mathbb{R}^n)$ there holds
\begin{equation}\label{weak-riesz-equation}
\mathcal{E}_{s}(u,\phi)=\int_{\mathbb{R}^n}f\overline{\phi} dx
\end{equation}
by the Fourier transformation. The scalar inequality
\begin{equation*}
 |T(a)-T(b)|^2
 \leq(a-b)(T(a)-T(b)),
 \qquad a,b\geq0,
 \end{equation*}
implies $T(u)\in \dot H^{s/2}(\mathbb{R}^n)$ and
$$\mathcal{E}_{s}(T(u))\leq \mathcal{E}_{s}(u,T(u)).$$
Since the right-hand side of \eqref{weak-riesz-equation} extends continuously to $\dot H^{s/2}$ because
$f\in L^{2n/(n+s)}$, by density, \eqref{weak-riesz-equation} remains
valid for $\phi=T(u)$. Since $0\leq T(u)\leq 1$, it follows that
$$\mathcal{E}_{s}(T(u))\leq \int_{\mathbb{R}^n}fT(u) dx\leq \|f\|_{L^1(\mathbb{R}^n)}.$$
For a general $0\leq f\in L^1(\mathbb{R}^n)$, we only need to take $f_j=\min\{f(x),j\}\chi_{B_j}$ and apply a classical approximation argument.
\end{proof}

Our second lemma states
\begin{lemma}
\label{lem:riesz-energy-identity}
For $0\leq h\in L^1(\mathbb{R}^n)$,
\begin{equation*}
 \int_{\mathbb{R}^n}hI_sh d x
 =(2\pi)^{-s}\int_{\mathbb{R}^n}|\xi|^{-s}|\widehat h(\xi)|^2 d\xi
\end{equation*}
with both sides allowed to be $+\infty$.
\end{lemma}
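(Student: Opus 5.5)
The plan is to regularize $h$ with the heat (Gaussian) semigroup and then pass to the limit, using monotone convergence on both sides of the identity. For $t>0$ let $G_t(x)=t^{-n/2}e^{-\pi|x|^2/t}$, so that $\widehat{G_t}(\xi)=e^{-\pi t|\xi|^2}$. The Riesz kernel is a superposition of Gaussians in the $t$ variable:
\begin{equation*}
 \gamma_{n,s}|x|^{s-n}=c\int_0^\infty t^{s/2-1}G_t(x)\,dt
\end{equation*}
for an explicit constant $c>0$. This follows from the Gamma-function integral, and the normalization of $\gamma_{n,s}$ is exactly what makes the Fourier symbol of $I_s$ equal to $(2\pi|\xi|)^{-s}$. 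The same integral gives $(2\pi|\xi|)^{-s}=c\int_0^\infty t^{s/2-1}e^{-\pi t|\xi|^2}\,dt$.

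First insert this representation into the left-hand side. Since every integrand is nonnegative, Tonelli's theorem applies with both sides possibly infinite, and gives
\begin{equation*}
 \int hI_sh\,dx=c\int_0^\infty t^{s/2-1}\iint h(x)h(y)G_t(x-y)\,dx\,dy\,dt .
\end{equation*}
For each fixed $t$, the inner double integral equals $\int h\,(G_t*h)\,dx$. Because $h\in L^1$ and $G_t\in L^1\cap L^\infty$, this quantity is finite. Writing $G_t=G_{t/2}*G_{t/2}$ and using symmetry, it equals $\|G_{t/2}*h\|_{L^2}^2$. By Plancherel this is $\int e^{-\pi t|\xi|^2}|\widehat h(\xi)|^2\,d\xi$, which is legitimate because $G_{t/2}*h\in L^1\cap L^\infty\subset L^2$ and its Fourier transform is $e^{-\pi t|\xi|^2/2}\widehat h$.

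Next, substitute this expression back into the $t$-integral. The integrand $t^{s/2-1}e^{-\pi t|\xi|^2}|\widehat h(\xi)|^2$ is nonnegative, so Tonelli's theorem again allows swapping the $t$ and $\xi$ integrals. The resulting $t$-integral is $(2\pi|\xi|)^{-s}$, which yields the right-hand side. Both sides are obtained as the same iterated integral of a nonnegative function, so they are equal in $[0,\infty]$.

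Most of the care goes into checking that the constant $c$ is the same in the spatial and Fourier representations. This amounts to verifying that the Fourier transform of $\gamma_{n,s}|x|^{s-n}$ is $(2\pi|\xi|)^{-s}$. The Gaussian subordination handles this directly: $\int_0^\infty t^{s/2-1}t^{-n/2}e^{-\pi|x|^2/t}\,dt=\Gamma(\tfrac{n-s}{2})(\pi|x|^2)^{(s-n)/2}$ and $\int_0^\infty t^{s/2-1}e^{-\pi t|\xi|^2}\,dt=\Gamma(\tfrac s2)(\pi|\xi|^2)^{-s/2}$. Taking $c=\big((2\pi)^s\pi^{-s/2}\Gamma(\tfrac s2)\big)^{-1}$ matches the second identity, and then the first identity reproduces $\gamma_{n,s}=2^{-s}\pi^{-n/2}\Gamma(\tfrac{n-s}{2})/\Gamma(\tfrac s2)$. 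Apart from this constant check, the only technical point is avoiding any manipulation of $\widehat{I_sh}$ directly. That is why every step is kept at positive, finite, regularized levels and Tonelli's theorem carries the possibly infinite quantities.
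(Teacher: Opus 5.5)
Your proposal is correct and follows essentially the same route as the paper: both represent the Riesz kernel $\gamma_{n,s}|x|^{s-n}$ and the symbol $(2\pi|\xi|)^{-s}$ as the same Gaussian (heat-kernel) superposition in $t$, and use Tonelli to interchange the integrals; the only difference is that you normalize the Gaussian as $G_t$ instead of $p_t$. You also spell out the fixed-$t$ step ($G_t=G_{t/2}*G_{t/2}$ plus Plancherel on $G_{t/2}*h\in L^1\cap L^\infty$), which the paper compresses into the phrase ``Fourier inversion for the Gaussian.''
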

\begin{proof}
Let $p_t(x)=(4\pi t)^{-n/2}e^{-|x|^2/(4t)}$.  Since
\begin{equation}
 (2\pi|\xi|)^{-s}=\frac1{\Gamma(s/2)}
 \int_0^\infty t^{s/2-1}e^{-4\pi^2t|\xi|^2} dt,
 \label{eq:negative-power-heat}
\end{equation}
Fourier inversion for the Gaussian yields
\begin{align*}
 &(2\pi)^{-s}\int_{\mathbb{R}^n}|\xi|^{-s}|\widehat h(\xi)|^2 d\xi\\
 &\quad=\iint_{\mathbb{R}^n\times\mathbb{R}^n}
 \left(\frac1{\Gamma(s/2)}\int_0^\infty
 t^{s/2-1}p_t(x-y)d t\right)h(x)h(y)d x d y.
\end{align*}
The expression in parentheses is $\gamma_{n,s}|x-y|^{s-n}$, which proves
lemma \ref{lem:riesz-energy-identity}.
\end{proof}

Recall \begin{equation*}
 \rho_R(y)=A_{n,s}(R^2-|y|^2)^{-s/2}\chi_{B_R}(y),
 \qquad
 A_{n,s}=\frac{\sin(\pi s/2)\Gamma(n/2)}
 {\gamma_{n,s}\pi^{1+n/2}},
\end{equation*}
and denote $M_R=\|\rho_R\|_{L^1}=\frac{\Gamma(n/2)}{\gamma_{n,s}\Gamma(s/2)\Gamma((n-s+2)/2)}R^{n-s}$ and
\begin{equation*}
 e_R=I_s\rho_R.
\end{equation*}
It is easy to check $e_R\in L^{\frac{2n}{n-s}}$.

\begin{lemma}
\label{lem:ball-calibration}
$e_R$ belongs to $\dot H^{s/2}(\mathbb{R}^n)$ and
\begin{equation*}
 \mathcal{E}_s(e_R)=M_R.
\end{equation*}
If $u\in\dot H^{s/2}(\mathbb{R}^n)$ and $u=1$ almost everywhere on $B_R$, then
\begin{equation*}
 \mathcal{E}_s(u)=M_R+\mathcal{E}_s(u-e_R)\geq M_R.
\end{equation*}
Consequently,
\begin{equation*}
 \operatorname{Cap}_s(B_R)=M_R.
\end{equation*}
\end{lemma}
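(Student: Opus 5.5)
The plan is to prove the three assertions in order: first that $e_R\in\dot H^{s/2}$ with $\mathcal{E}_s(e_R)=M_R$, then the Pythagorean identity for competitors $u$ equal to $1$ on $B_R$, and finally the value of $\operatorname{Cap}_s(B_R)$.

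\emph{Energy of $e_R$.} Since $\rho_R\geq0$ is in $L^1$, Lemma \ref{lem:riesz-energy-identity} with $h=\rho_R$ gives
\[
(2\pi)^{-s}\int_{\mathbb{R}^n}|\xi|^{-s}|\widehat\rho_R(\xi)|^2\,d\xi=\int_{\mathbb{R}^n}\rho_R\, I_s\rho_R\,dx .
\]
By Lemma \ref{lem:ball-potential}, $I_s\rho_R=1$ on $B_R$, which contains the support of $\rho_R$. Hence the right-hand side equals $\|\rho_R\|_{L^1}=M_R<\infty$. On the Fourier side $\widehat{e_R}=(2\pi|\xi|)^{-s}\widehat\rho_R$, so
\[
\mathcal{E}_s(e_R)=(2\pi)^{s}\int|\xi|^s|\widehat{e_R}|^2\,d\xi=(2\pi)^{-s}\int|\xi|^{-s}|\widehat\rho_R|^2\,d\xi=M_R .
\]
Thus $e_R\in\dot H^{s/2}$ and its energy is $M_R$. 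Some care is needed to identify the distribution $e_R\in L^{2n/(n-s)}$ with the $\dot H^{s/2}$ element whose Fourier transform is $(2\pi|\xi|)^{-s}\widehat\rho_R$. I would handle this by approximating $\rho_R$ with the truncations $\min\{\rho_R,j\}$, as in Lemma \ref{lem:truncation}, and using monotone convergence.

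\emph{Weak equation and Pythagoras.} Next I would establish $\mathcal{E}_s(e_R,\phi)=\int\rho_R\overline\phi\,dx$, first for $\phi\in C_c^\infty$ by Fourier transform as in \eqref{weak-riesz-equation}. This is the main obstacle: $\rho_R$ is not bounded, so it is unclear that the right-hand side extends continuously to $\dot H^{s/2}$, and that extension is needed to pass to general $\phi$. The estimate to use is Cauchy--Schwarz in Fourier space:
\[
\Bigl|\int\rho_R\overline\phi\Bigr|\leq\Bigl((2\pi)^{-s}\int|\xi|^{-s}|\widehat\rho_R|^2\Bigr)^{1/2}\mathcal{E}_s(\phi)^{1/2}=M_R^{1/2}\,\mathcal{E}_s(\phi)^{1/2}.
\]
So $\phi\mapsto\int\rho_R\overline\phi$ extends to a bounded functional on $\dot H^{s/2}$. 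One must still check that this extension is given by integration against $\rho_R$ for $u\in\dot H^{s/2}$ with $u=1$ on $B_R$. For that I would take smooth approximants $\phi_k\to u$ in $\dot H^{s/2}$; they also converge in $L^{2n/(n-s)}$ by the Sobolev inequality, hence in $L^1(B_R)$. One then uses that $\rho_R\in L^{p}(B_R)$ for every $p<2/s$, which gives weighted control; alternatively one can mollify $u$ so that it stays $1$ on a slightly smaller ball and treat the boundary layer with the integrability of $\rho_R$. Granting this, $\mathcal{E}_s(e_R,u)=\int_{B_R}\rho_R\,u=M_R$. Expanding
\[
\mathcal{E}_s(u)=\mathcal{E}_s(e_R)+2\,\mathrm{Re}\,\mathcal{E}_s(e_R,u-e_R)+\mathcal{E}_s(u-e_R)
\]
and noting $\mathcal{E}_s(e_R,u-e_R)=M_R-M_R=0$ yields the identity $\mathcal{E}_s(u)=M_R+\mathcal{E}_s(u-e_R)\geq M_R$.

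\emph{Capacity.} For the upper bound, $\rho_R$ is admissible by Lemma \ref{lem:ball-potential}, so $\operatorname{Cap}_s(B_R)\leq M_R$. For the lower bound, let $f\geq0$ be admissible. By Lemma \ref{lem:truncation}, $T(I_sf)\in\dot H^{s/2}$ with $\mathcal{E}_s(T(I_sf))\leq\|f\|_{L^1}$, and $T(I_sf)=1$ on $B_R$ because $I_sf\geq1$ there. The Pythagorean inequality then gives $\|f\|_{L^1}\geq M_R$. Taking the infimum over admissible $f$ gives $\operatorname{Cap}_s(B_R)=M_R$.
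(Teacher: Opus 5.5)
Your first and third steps agree with the paper. The middle step, however, has a genuine gap exactly where you write ``Granting this''. You never show that the continuous extension $\ell$ of $\phi\mapsto\int\rho_R\overline{\phi}$ to $\dot H^{s/2}$, evaluated at a competitor $u$, equals the Lebesgue integral $\int_{B_R}\rho_R u=M_R$. The Cauchy--Schwarz bound gives only boundedness of $\ell$, not its value at $u$, and this identification is the entire content of the Pythagorean identity.

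Your proposed mechanism does not supply it in general. Convergence $\phi_k\to u$ in $L^1(B_R)$ says nothing against the unbounded weight $\rho_R$. H\"older against $L^{2n/(n-s)}$-convergence requires $\rho_R\in L^{2n/(n+s)}(B_R)$. Since $\rho_R\approx(R-|y|)^{-s/2}$ near $\partial B_R$, it lies in $L^p$ only for $p<2/s$, and $2n/(n+s)<2/s$ holds iff $s<n/(n-1)$. So for every $n\geq3$ and $n/(n-1)\leq s<2$ this argument fails. The mollification alternative has the same defect. $u_\varepsilon$ equals $1$ on $B_{R-\varepsilon}$, but on the layer $B_R\setminus B_{R-\varepsilon}$, which carries $\rho_R$-mass of order $\varepsilon^{1-s/2}$, it depends on values of $u$ outside $B_R$. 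Those are controlled only in $L^{2n/(n-s)}$, so closing the argument would need an extra Hardy-type estimate near $\partial B_R$ that you have not supplied.

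The remedy, which is what the paper does, is to approximate $\rho_R$ instead of $u$. Take $\rho_m=\rho_R\chi_{B_{R(1-1/m)}}$, or the truncations $\min\{\rho_R,j\}$ you already use in the first step. These are bounded with compact support, hence in $L^{2n/(n+s)}$. Their pairing with $u\in\dot H^{s/2}\subset L^{2n/(n-s)}$ is therefore an honest integral, and it equals $\int\rho_m\,dx$ because $u=1$ a.e.\ on the support.

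Convergence in $\dot H^{-s/2}$ comes from positivity and the equilibrium property of Lemma~\ref{lem:ball-potential}. The difference $h_m=\rho_R-\rho_m$ satisfies $0\leq h_m\leq\rho_R$, so $I_sh_m\leq I_s\rho_R=1$ on $B_R\supseteq\operatorname{supp}h_m$. Lemma~\ref{lem:riesz-energy-identity} then gives $\|h_m\|_{\dot H^{-s/2}}^2=\int h_mI_sh_m\,dx\leq\|h_m\|_{L^1}\to0$. Hence $\mathcal{E}_s(e_R,u)=\langle\rho_R,u\rangle=\lim_m\int\rho_m\,dx=M_R$, and the rest of your argument goes through unchanged.
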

\begin{proof}
Since $e_R=1$ on the support of $\rho_R$, lemma \ref{lem:riesz-energy-identity}
gives
\begin{equation*}
 (2\pi)^{-s}\int_{\mathbb{R}^n}|\xi|^{-s}
 |\widehat{\rho_R}(\xi)|^2 d\xi
 =\int_{\mathbb{R}^n}\rho_Re_R dx=M_R,
\end{equation*}
which implies $e_R \in \dot H^{s/2}(\mathbb{R}^n)$ and $\mathcal{E}_s(e_R)=M_R.$

For $m\geq2$, set
\begin{equation*}
 \rho_m=\rho_R\chi_{B_{R(1-1/m)}},
 \qquad
 h_m=\rho_R-\rho_m,
\end{equation*}
thus $I_sh_m\leq e_R=1$ on $B_R$ by lemma \ref{lem:ball-potential}. Hence lemma \ref{lem:riesz-energy-identity} implies
\begin{equation}
 \|h_m\|_{\dot H^{-s/2}}^2
 =\int_{\mathbb{R}^n}h_mI_sh_m d x
 \leq\|h_m\|_1\longrightarrow0,
 \label{eq:rho-negative-sobolev-convergence}
\end{equation}
which means $\rho_m\rightarrow\rho_R$ in $\dot H^{-s/2}$. Thus for any $u\in\dot H^{s/2}(\mathbb{R}^n)$ with $u=1$ almost everywhere on $B_R$, there holds
$$
 \langle\rho_m,u\rangle=\int\rho_m d x,
$$
and \begin{equation*}
 \langle\rho_R,u\rangle=M_R
 =\langle\rho_R,e_R\rangle
\end{equation*}
by \eqref{eq:rho-negative-sobolev-convergence}.
Since $$\mathcal{E}_s(e_R,\varphi)=\langle\rho_R,\varphi\rangle$$ for
$\varphi\in\dot H^{s/2}$, it follows that
$\mathcal{E}_s(e_R,u-e_R)=0$ which implies
\begin{equation}\label{est E}
 \mathcal{E}_s(u)=M_R+\mathcal{E}_s(u-e_R)\geq M_R.
\end{equation}
Now, we only need to prove
\begin{equation*}
 \operatorname{Cap}_s(B_R)=M_R.
\end{equation*}
It is obvious that $\operatorname{Cap}_s(B_R)\leq \|\rho_R\|_{L^1}=M_R$. On the other hand, for any $0\leq g\in L^{1}$ with $I_sg\geq 1$ on $B_R$, we have that
$$\|g\|_{L^1}\geq \mathcal{E}_s(T(I_sg))\geq M_R$$
by lemma \ref{lem:truncation} and \eqref{est E}, which completes the proof of the lemma.
\end{proof}
Now we are in a position to prove the theorem \ref{isoperimetric}. Let $0\leq f\in L^{1}$ and $I_sf\geq 1$ on $E$. Denote
$$w=\min\{I_sf,1\}.$$
Thus we have
\begin{equation*}
 w\in\dot H^{s/2}(\mathbb{R}^n),
 \qquad
 \mathcal{E}_s(w)\leq\|f\|_1
\end{equation*}
by lemma \ref{lem:truncation}. And we also have $w=1$ on $E$. Let $w^{\ast}$ be the rearrangement of $w$. Then $w^{\ast}=1$ a.e. on the ball $B_R$ with measure $|B_R|=|E|$ and
$$\mathcal{E}_s(w^{\ast})\leq \mathcal{E}_{s}(w)$$
by the fractional P\'{o}lya-Szeg\H{o} inequality for $0<s<2$. Therefore, by lemma \ref{lem:truncation} and lemma \ref{lem:ball-calibration} we obtain
$$\|f\|_{L^1}\geq\mathcal{E}_{s}(w)\geq\mathcal{E}_s(w^{\ast})\geq M_R=\operatorname{Cap}_{s}(B_R),$$
which implies
$$\operatorname{Cap}_{s}(E)\geq \operatorname{Cap}_{s}(B_R).$$

\subsection{Sharp weak estimates for the Riesz potentials}
Now we can prove our main theorem when $0<s<\min\{2,n\}$. Since $|I_sf|\leq I_s|f|$, then we may assume that $f\geq 0$. For $\lambda>0$, set
$E_{\lambda}=\{I_sf(x)\geq \lambda\}$. Thus
$$\operatorname{Cap}_s(E_\lambda)\leq \frac{\|f\|_{L^1}}{\lambda}$$
and the isoperimetric inequality for Riesz capacity namely Theorem \ref{isoperimetric} implies
$$(\frac{|E_\lambda|}{v_n})^{\frac{n-s}{n}}\operatorname{Cap}_s(B_1)\leq \operatorname{Cap}_s(E_\lambda).$$
Since $$\operatorname{Cap}_s(B_1)=M_1=\frac{\Gamma(n/2)}{\gamma_{n,s}\Gamma(s/2)\Gamma((n-s+2)/2)}$$
by lemma \ref{lem:ball-calibration}, then we have
$$\lambda|E_\lambda|^{\frac{n-s}{n}}\leq \frac{v_n^{\frac{n-s}{n}}\|f\|_{L^1}}{\operatorname{Cap}_s(B_1)}=\gamma_{n,s}v_n^{\frac{n-s}{n}}\frac{\Gamma(s/2)\Gamma((n-s+2)/2)}{\Gamma(n/2)}\|f\|_{L^1}.$$
On the other hand, in Section 2.1 we already proved that the sharp constant can be attained by the function
\begin{equation*}
 \rho_R(y)=A_{n,s}(R^2-|y|^2)^{-s/2}\chi_{B_R}(y),
 \qquad
 A_{n,s}=\frac{\sin(\pi s/2)\Gamma(n/2)}
 {\gamma_{n,s}\pi^{1+n/2}}.
\end{equation*}

\section{\texorpdfstring{Sharp weak estimates for the Riesz potentials when $2\leq s<n$}{Sharp weak estimates when 2<=s<n}}\label{sec:weak}
Throughout this section, we assume $n\geq3$ and $2\leq s<n$.

\begin{theorem}\label{thm:large-s}
For every $f\in L^1(\mathbb R^n)$,
\begin{equation}\label{eq:large-s-weak}
 \|I_sf\|_{L^{\frac{n}{n-s},\infty}(\mathbb R^n)}
 \leq
 \gamma_{n,s}v_n^{\frac{n-s}{n}}\|f\|_{L^1(\mathbb R^n)}.
\end{equation}
If $f\geq0$ and $f\not\equiv0$, then
\begin{equation}\label{eq:large-s-limit}
 \lim_{\lambda\downarrow0}
 \lambda|\{I_sf>\lambda\}|^{\frac{n-s}{n}}
 =
 \gamma_{n,s}v_n^{\frac{n-s}{n}}\|f\|_{L^1(\mathbb R^n)}.
\end{equation}
Consequently, every nonzero nonnegative $L^1$ function is a maximizer in
\eqref{eq:large-s-weak}, and the constant in \eqref{eq:large-s-weak}
is sharp.
\end{theorem}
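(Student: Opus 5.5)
We follow the strategy sketched in the introduction: reduce to nonnegative smooth compactly supported $f$, turn the level sets of $I_sf$ into level sets of a superharmonic function, and then invoke the sharp Newtonian isocapacitary inequality.

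\textbf{Reduction.} Since $|I_sf|\le I_s|f|$, it suffices to treat $f\ge0$, $f\not\equiv0$. By density and monotone or Fatou-type convergence of distribution functions, we may further assume $f\in C_c^\infty$. Put $u=I_sf$ and $\alpha=\frac{n-2}{n-s}\in(0,1]$ (recall $s\ge2$), and set $V=u^\alpha$.

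\textbf{Step 1: $V$ is superharmonic.} Write $u=I_2(I_{s-2}f)$, so $-\Delta u=I_{s-2}f\ge0$ when $s>2$; when $s=2$ we have $V=u$ and $-\Delta u=f$. Then
$$-\Delta V=\alpha u^{\alpha-1}(-\Delta u)+\alpha(1-\alpha)u^{\alpha-2}|\nabla u|^2\ge0 .$$
Here $u>0$ everywhere, because the kernel $|y|^{s-n}$ is positive, so $V$ is smooth and superharmonic.

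\textbf{Step 2: total Riesz mass.} As $|x|\to\infty$,
$$u(x)=\gamma_{n,s}\|f\|_1|x|^{s-n}(1+o(1)),\qquad V(x)=(\gamma_{n,s}\|f\|_1)^{\alpha}|x|^{2-n}(1+o(1)).$$
Since $V\to0$ at infinity and is superharmonic, it is the Newtonian potential of the measure $\mu=-\Delta V\ge0$, that is, $V=\gamma_{n,2}|\cdot|^{2-n}*\mu$. Comparing the asymptotics gives
$$\mu(\mathbb R^n)=\frac{(\gamma_{n,s}\|f\|_1)^\alpha}{\gamma_{n,2}} .$$
Alternatively, integrate $-\Delta V$ over $B_\rho$ with the divergence theorem, using the gradient asymptotics of $u$.

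This is the main obstacle. One must justify the representation $V=I_2\mu$, i.e. that there is no harmonic part and no mass escapes to infinity. This needs asymptotics of $\nabla u$ to first order. For $f\in C_c^\infty$ these follow from differentiating the kernel, which gives $\nabla u=\gamma_{n,s}\|f\|_1\nabla|x|^{s-n}+O(|x|^{s-n-2})$. Then the flux $-\int_{\partial B_\rho}\partial_rV$ converges to $(n-2)|\mathbb S^{n-1}|(\gamma_{n,s}\|f\|_1)^\alpha$, and $\gamma_{n,2}(n-2)|\mathbb S^{n-1}|=1$ gives the claimed mass.

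\textbf{Step 3: capacity and isocapacitary inequality.} On $E_\lambda=\{u>\lambda\}=\{V>\lambda^\alpha\}$, the measure $\lambda^{-\alpha}\mu$ is admissible for the Newtonian capacity, in its measure version of the $s=2$ definition of $\operatorname{Cap}_2$. Hence
$$\operatorname{Cap}_2(E_\lambda)\le\lambda^{-\alpha}\frac{(\gamma_{n,s}\|f\|_1)^\alpha}{\gamma_{n,2}} .$$
The classical sharp Newtonian isocapacitary inequality (Pólya–Szegő) gives $\operatorname{Cap}_2(E)\ge\operatorname{Cap}_2(B_1)(|E|/v_n)^{(n-2)/n}$. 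The ball has equilibrium measure the uniform measure on the sphere $\partial B_1$, so $\operatorname{Cap}_2(B_1)=1/\gamma_{n,2}$. Combining,
$$(|E_\lambda|/v_n)^{(n-2)/n}\le\Big(\frac{\gamma_{n,s}\|f\|_1}{\lambda}\Big)^{\alpha}.$$
Raising to the power $1/\alpha$ and using $\frac{n-2}{n}\cdot\frac1\alpha=\frac{n-s}{n}$ yields $\lambda|E_\lambda|^{\frac{n-s}{n}}\le\gamma_{n,s}v_n^{\frac{n-s}{n}}\|f\|_1$, which is \eqref{eq:large-s-weak}.

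\textbf{Limit and sharpness.} The limit \eqref{eq:large-s-limit} is the result of \cite{T} quoted in the introduction, valid for all $0<s<n$. Combined with \eqref{eq:large-s-weak}, it shows that the supremum over $\lambda$ equals $\gamma_{n,s}v_n^{\frac{n-s}{n}}\|f\|_1$ for every nonzero $f\ge0$. Hence every such $f$ is a maximizer and the constant is sharp.
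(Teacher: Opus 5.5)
Your Step 1 fails for $s>2$. Since $s\ge 2$ means $n-s\le n-2$, the exponent is $\alpha=\frac{n-2}{n-s}\ge 1$, not $\alpha\in(0,1]$. So for $s>2$ the coefficient $\alpha(1-\alpha)=-\alpha\frac{s-2}{n-s}$ is negative. The gradient term in your formula for $-\Delta V$ therefore has the wrong sign, and $-\Delta V\ge 0$ does not follow. What you actually need is
\[
u\,(-\Delta u)\ \ge\ \frac{s-2}{n-s}\,|\nabla u|^2,
\qquad\text{i.e.}\qquad
|\nabla u|^2\le \frac{n-s}{s-2}\,u\,I_{s-2}f .
\]
This inequality is the heart of the proof, not a technicality. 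The paper proves it as follows:
\begin{itemize}
\item differentiate the kernel, $\nabla u(x)=-(n-s)\gamma_{n,s}\int(x-y)|x-y|^{s-n-2}f(y)\,dy$;
\item split $|x-y|^{s-n-1}=|x-y|^{\frac{s-n}{2}}\,|x-y|^{\frac{s-n-2}{2}}$ and apply Cauchy--Schwarz against $f(y)\,dy$;
\item use the identity $\gamma_{n,s-2}=(n-s)(s-2)\gamma_{n,s}$.
\end{itemize}
The constants match exactly. The bound is an equality for a point mass, where $u^\beta=c|x|^{\beta(s-n)}$ is superharmonic off the origin iff $\beta(n-s)\le n-2$. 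Hence $\frac{n-2}{n-s}$ is the largest power of $u$ this argument makes superharmonic, and it is precisely the power giving the Newtonian decay $|x|^{2-n}$ you use in Step 2. For $s=2$ your argument is fine, since then $V=u$ and $-\Delta V=f$.

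Once superharmonicity is established, your Steps 2--3 are a workable variant of the paper's argument. You use the source/measure version of Newtonian capacity. Admissibility of $\lambda^{-\alpha}(-\Delta V)$ requires $I_2(-\Delta V)\ge V$ on $E_\lambda$, i.e.\ the representation $V=I_2(-\Delta V)$. This can be justified by Liouville's theorem applied to the harmonic function $V-I_2(-\Delta V)$, after checking that $I_2(-\Delta V)\to 0$ at infinity.

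The paper avoids this representation altogether. It works with the Dirichlet capacity and tests $-\Delta W$ against the truncation $\min\{W,t\}$, which gives
\[
\operatorname{cap}_2(\{W\ge t\})\le t^{-2}\int_{\{W<t\}}|\nabla W|^2\le t^{-1}\int(-\Delta W).
\]
It then proves the isocapacitary inequality directly by P\'olya--Szeg\H{o} and a one-dimensional Cauchy--Schwarz argument. So the obstacle you flagged in Step 2 is manageable; the genuinely missing idea is the gradient inequality in Step 1.
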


The limiting identity \eqref{eq:large-s-limit} was proved by the first author in \cite{T}.
To prove the upper bound, we first show that
$(I_sf)^{\frac{n-2}{n-s}}$ is superharmonic and compute the total mass
of its negative Laplacian. We then combine the capacitary weak-type
estimate with the sharp Newtonian isocapacitary inequality.

We first recall the homogeneous Sobolev space used below.  Following
\cite[Section~8.2]{LL}, we denote by $D^{1,2}(\mathbb{R}^n)$ the space
denoted there by $D^1(\mathbb{R}^n)$: it consists of all
$w\in L^1_{\mathrm{loc}}(\mathbb{R}^n)$ whose distributional gradient
belongs to $L^2(\mathbb{R}^n)$ and which vanish at infinity in the sense
that
$$
 \bigl|\{x\in\mathbb{R}^n:|w(x)|>a\}\bigr|<\infty,\qquad\text{for every }a>0.
$$

\begin{lemma}\label{lem:newtonian-power}
Let $0\leq f\in C_c^\infty(\mathbb{R}^n)$, $V:=(I_sf)^{\frac{n-2}{n-s}}$, $f\not\equiv0$, and
$$
 m:=\int_{\mathbb{R}^n}f(x)dx.
$$
Then $V\in C^\infty(\mathbb{R}^n)\cap D^{1,2}(\mathbb{R}^n)$ is positive,
$V(x)\to0$ as $|x|\to\infty$, and $-\Delta V\geq0$.  Moreover,
\begin{equation}\label{eq:newtonian-mass}
 \int_{\mathbb{R}^n}(-\Delta V)dx
 =n(n-2)v_n(\gamma_{n,s}m)^{\frac{n-2}{n-s}}.
\end{equation}
\end{lemma}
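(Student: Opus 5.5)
Set $u:=I_sf$ and $p:=\frac{n-2}{n-s}\in(0,1]$. We have $p=1$ exactly when $s=2$, and $p<1$ when $2<s<n$. Since $f\geq0$ is smooth, compactly supported and not identically zero, $u$ is smooth and strictly positive. It also satisfies $u(x)=\gamma_{n,s}m|x|^{s-n}(1+O(|x|^{-1}))$ as $|x|\to\infty$, with the corresponding expansion for $\nabla u$, namely $\nabla u=\gamma_{n,s}m(s-n)|x|^{s-n-2}x+O(|x|^{s-n-2})$. All of this follows by differentiating under the integral and Taylor expanding $|x-y|^{s-n}$ for $y\in\operatorname{supp} f$. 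Hence $V=u^p$ is smooth and positive, and
\[
V(x)=(\gamma_{n,s}m)^{p}|x|^{2-n}\bigl(1+O(|x|^{-1})\bigr),\qquad \nabla V=(\gamma_{n,s}m)^p(2-n)|x|^{-n}x+O(|x|^{-n}).
\]
So $V\to0$ and $|\nabla V|\in L^2$ near infinity (because $|x|^{2-2n}$ is integrable there). Since $V$ is smooth on compact sets, $V\in D^{1,2}$.

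For superharmonicity, we compute $-\Delta V=p u^{p-1}(-\Delta u)+p(1-p)u^{p-2}|\nabla u|^2$. The second term is nonnegative since $p\le1$. For the first term, $-\Delta u=-\Delta I_s f=I_{s-2}f\geq0$ when $s>2$, by the semigroup property $I_s=I_2I_{s-2}$ (equivalently, a direct computation gives $-\Delta|x|^{s-n}=(n-s)(s-2)|x|^{s-n-2}>0$, with $\gamma$-constants matching). When $s=2$ we have $-\Delta u=f\geq0$. So $-\Delta V\geq0$ pointwise, and no distributional subtlety arises because everything is smooth.

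For the mass identity we use the divergence theorem on $B_r$:
\[
\int_{B_r}(-\Delta V)\,dx=-\int_{\partial B_r}\partial_\nu V\,d\sigma .
\]
By the gradient asymptotics, $-\partial_\nu V=(n-2)(\gamma_{n,s}m)^p r^{1-n}+O(r^{-n})$ on $\partial B_r$. Multiplying by $|\partial B_r|=nv_nr^{n-1}$ gives $n(n-2)v_n(\gamma_{n,s}m)^p+O(r^{-1})$. Since $-\Delta V\geq0$, monotone convergence as $r\to\infty$ yields \eqref{eq:newtonian-mass}.

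The main technical point is making the asymptotic expansion of $\nabla u$ rigorous with a uniform error. This is routine because $f$ has compact support: for $|x|\ge 2\,\mathrm{diam}$ of the support, write $\nabla_x|x-y|^{s-n}=(s-n)|x-y|^{s-n-2}(x-y)$ and expand in $y/|x|$. The other point needing care is the sign of $-\Delta u$ for $s>2$. I would verify this directly via the kernel computation above, since $f\geq0$ and $\Delta_x|x-y|^{s-n}=(s-n)(s-2)|x-y|^{s-n-2}\le0$ away from $y$. This kernel is locally integrable for $s>2$, so differentiating under the integral is justified.
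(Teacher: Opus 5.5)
There is a genuine gap in your superharmonicity step. You assert $p=\frac{n-2}{n-s}\in(0,1]$ with $p<1$ for $2<s<n$, but the inequality goes the other way. For $s>2$ we have $n-s<n-2$, so $p>1$ (e.g.\ $n=5$, $s=3$ gives $p=3/2$). Your identity
\[
-\Delta V=p\,u^{p-1}(-\Delta u)+p(1-p)\,u^{p-2}|\nabla u|^2
\]
is correct, but its second term is now $\le 0$ and strictly negative wherever $\nabla u\neq0$. So $-\Delta V\ge0$ does not follow from $-\Delta u=I_{s-2}f\ge0$ alone. This is not a bookkeeping slip that can be patched. A convex power of a superharmonic function need not be superharmonic: $|x|^{2-n}$ raised to a power $p>1$ is subharmonic off the origin. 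Some additional structure of Riesz potentials has to enter.

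The missing ingredient, which is the heart of the paper's proof, is the pointwise gradient bound
\[
|\nabla u|^2\le\frac{n-s}{s-2}\,u\,I_{s-2}f .
\]
To prove it, write $\nabla u(x)=-(n-s)\gamma_{n,s}\int(x-y)|x-y|^{s-n-2}f(y)\,dy$. Then split $|x-y|^{s-n-1}f=\bigl(|x-y|^{\frac{s-n}{2}}f^{1/2}\bigr)\bigl(|x-y|^{\frac{s-n}{2}-1}f^{1/2}\bigr)$, apply Cauchy--Schwarz, and use $\gamma_{n,s-2}=(n-s)(s-2)\gamma_{n,s}$. Since $p-1=\frac{s-2}{n-s}$, your identity can be rewritten as
\[
-\Delta V=p\,u^{p-2}\Bigl[u\,I_{s-2}f-\tfrac{s-2}{n-s}|\nabla u|^2\Bigr],
\]
and the gradient bound makes the bracket nonnegative. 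The exponent $\frac{n-2}{n-s}$ is exactly critical here, so a cruder estimate of the negative term will not do. In the limiting case of a point mass, $u=\gamma_{n,s}|x|^{s-n}$, the bracket vanishes identically away from the origin and $V$ is harmonic there. The rest of your argument coincides with the paper's and is fine: smoothness and positivity, the asymptotics of $u$, $\nabla u$, $V$, $\nabla V$, membership in $D^{1,2}$, and the flux computation on $\partial B_r$ followed by monotone convergence.
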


\begin{proof}
Set $u:=I_sf$.
The positivity of the kernel and standard convolution regularity show that
$u$ is positive and smooth, and therefore so is $V$.  Since $s\geq2$,
differentiating the kernel gives
\begin{equation}\label{eq:large-s-gradient-u}
 \nabla u(x)
 =-(n-s)\gamma_{n,s}
 \int_{\mathbb{R}^n}(x-y)|x-y|^{s-n-2}f(y)dy.
\end{equation}
If $s=2$, then
$$
 V=I_2f=(-\Delta)^{-1}f,
 \qquad
 -\Delta V=f\geq0.
$$
Suppose that $2<s<n$.  The operator identity
$I_s=(-\Delta)^{-\frac{s}{2}}$ gives $ -\Delta u=I_{s-2}f$, which first holds in $\mathcal{S}'(\mathbb{R}^n)$ and then pointwise, since both sides are smooth.

Since $\gamma_{n,s-2}=(n-s)(s-2)\gamma_{n,s}$, using the Cauchy--Schwarz inequality,
\begin{equation}\label{eq:gradient}
	\begin{split}
		|\nabla u(x)|^2 &= (n-s)^2 \gamma_{n,s}^2\left|\int_{\R^n} |x-y|^{\frac{s-n}{2}}f(y)^{\frac{1}{2}} (x-y)|x-y|^{\frac{s-n}{2}-2}f(y)^{\frac{1}{2}}dy\right|^2\\
		&\leq (n-s)^2\gamma_{n,s}u(x)\int_{\R^n} |x-y|^{s-n-2}f(y)dy\\
		&=\frac{n-s}{s-2}\,u(x)I_{s-2}f(x).
	\end{split}
\end{equation}
Since $V=u^{\frac{n-2}{n-s}}$, the chain rule, the identity
$-\Delta u=I_{s-2}f$, and \eqref{eq:gradient} yield
$$
-\Delta V=\frac{n-2}{n-s}u^{\frac{n-2}{n-s}-2}
\left[uI_{s-2}f-\frac{s-2}{n-s}|\nabla u|^2
\right]\geq 0.
$$

It remains to verify the decay and \eqref{eq:newtonian-mass}. A direct computation gives the following estimates which hold uniformly in the direction of $x$:
\begin{align*}
	u(x)&=\gamma_{n,s}m|x|^{s-n}+O(|x|^{s-n-1}),\\
	\nabla u(x)&=-(n-s)\gamma_{n,s}m x|x|^{s-n-2}+O(|x|^{s-n-2}).
\end{align*}
The first estimate gives $V(x)=O(|x|^{2-n})$, and hence $V(x)\to 0$ as $|x|\to \infty$ and
$V\in L^{\frac{2n}{n-2}}(\R^n)$. Applying the chain rule to the two
estimates gives
$$
\nabla V(x)=-(n-2)(\gamma_{n,s}m)^{\frac{n-2}{n-s}}
x|x|^{-n}
+O(|x|^{-n}).
$$
Thus $|\nabla V(x)|=O(|x|^{1-n})$. These decay estimates imply $\nabla V\in L^2(\R^n)$ and $V\in D^{1,2}(\R^n)$.

Moreover, using that $\partial_r=\frac{x}{|x|}\cdot\nabla$ we obtain
$$
\partial_rV(x)
=-(n-2)(\gamma_{n,s}m)^{\frac{n-2}{n-s}}
|x|^{1-n}
+O(|x|^{-n}).
$$
The divergence theorem on $B_R$ therefore yields
$$
 \int_{B_R}(-\Delta V)\,dx=-\int_{\partial B_R}\partial_rV\,d\sigma=n(n-2)v_n(\gamma_{n,s}m)^{\frac{n-2}{n-s}}
 +O(R^{-1}).
$$
Since $-\Delta V\geq0$, letting $R\to\infty$ and using monotone convergence proves \eqref{eq:newtonian-mass}.
\end{proof}

\begin{lemma}\label{lem:newtonian-level-set}
	Let $n\geq3$ and $W\in C^\infty(\mathbb R^n)\cap D^{1,2}(\mathbb R^n)$ be positive,
	assume that $W(x)\to0$ as $|x|\to\infty$ and $-\Delta W\geq0$, and
	suppose that, for some $M>0$,
	$$
	\int_{\mathbb R^n}(-\Delta W)\,dx=n(n-2)v_nM.
	$$
	Then for every $t>0$,
	\begin{equation}\label{eq:newtonian-level-set}
		t|\{W>t\}|^{\frac{n-2}{n}}
		\leq v_n^{\frac{n-2}{n}}M.
	\end{equation}
\end{lemma}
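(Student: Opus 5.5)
Fix $t>0$ and let $K_t=\{W\geq t\}$. Since $W$ is continuous and tends to $0$ at infinity, $K_t$ is compact and $\{W>t\}\subset K_t$. The approach is to bound the Newtonian capacity of $K_t$ from above by $\int(-\Delta W)/t$, and then apply the sharp Newtonian isocapacitary inequality, for which balls are extremal.

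First normalize. Let $\operatorname{cap}(K)=\inf\{\int|\nabla\phi|^2:\phi\in D^{1,2},\ \phi\geq1\text{ on }K\}$. Then $\operatorname{cap}(B_r)=n(n-2)v_nr^{n-2}$, with equilibrium potential $\min\{1,(r/|x|)^{n-2}\}$, and the isocapacitary inequality (Pólya--Szegő rearrangement for the Dirichlet energy) gives
$$\operatorname{cap}(K)\geq n(n-2)v_n\left(|K|/v_n\right)^{\frac{n-2}{n}}.$$
This is the $s=2$ analogue of Theorem \ref{isoperimetric}. Its proof is identical: symmetric decreasing rearrangement of a truncated competitor $\min\{\phi,1\}$, followed by the explicit ball computation. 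The classical Pólya--Szegő inequality replaces the fractional one.

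Next, the capacitary estimate. Take $\phi=\min\{W/t,1\}\in D^{1,2}$, which equals $1$ on $K_t$. Then
$$\operatorname{cap}(K_t)\leq\int|\nabla\phi|^2=\frac1{t^2}\int_{\{W<t\}}|\nabla W|^2=\frac1t\int\nabla W\cdot\nabla\phi.$$
We want to integrate by parts, $\int\nabla W\cdot\nabla\phi=\int(-\Delta W)\phi$, and then use $0\leq\phi\leq1$ and $-\Delta W\geq0$ to obtain
$$\operatorname{cap}(K_t)\leq\frac1t\int(-\Delta W)=\frac{n(n-2)v_nM}{t}.$$
Combining with the isocapacitary inequality gives $t(|K_t|/v_n)^{\frac{n-2}{n}}\leq M$, which is \eqref{eq:newtonian-level-set} after monotonicity $|\{W>t\}|\leq|K_t|$.

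The main obstacle is justifying the integration by parts, because $\phi$ is not compactly supported. I would integrate over $B_R$:
$$\int_{B_R}\nabla W\cdot\nabla\phi=\int_{B_R}(-\Delta W)\phi+\int_{\partial B_R}\phi\,\partial_rW\,d\sigma.$$
The left side converges to the full integral since $\nabla W,\nabla\phi\in L^2$, and the first term on the right converges by monotone convergence. The boundary term is handled by averaging in $R$. We have $\int_{R}^{2R}\int_{\partial B_\rho}\phi|\nabla W|\leq t^{-1}\int_{B_{2R}\setminus B_R}W|\nabla W|$. By Cauchy--Schwarz and $W\in L^{2n/(n-2)}$, this is at most $C\,t^{-1}R\,\|W\|_{L^{2n/(n-2)}(B_{2R}\setminus B_R)}\|\nabla W\|_{L^2(B_{2R}\setminus B_R)}$, which is $o(R)$. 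Hence along some sequence $\rho_k\to\infty$ the boundary term tends to $0$, and the identity follows.

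Alternatively, I would avoid the boundary term by replacing $\phi$ with $\phi\,\eta_R$ for a cutoff $\eta_R$, where the same Hölder bound kills the error. The membership $W\in L^{2n/(n-2)}$ is guaranteed by $D^{1,2}$ and the Sobolev inequality. In our application, $W=V$ from Lemma \ref{lem:newtonian-power} also has explicit decay $|W|=O(|x|^{2-n})$ and $|\nabla W|=O(|x|^{1-n})$, so the boundary term is $O(R^{1-n})\to0$ directly.
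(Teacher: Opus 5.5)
Your proposal is correct and follows the paper's proof. The competitor $\min\{W/t,1\}$ bounds $\operatorname{cap}_2(\{W\geq t\})$ by $t^{-1}\int_{\mathbb R^n}(-\Delta W)\,dx$, and the sharp Newtonian isocapacitary inequality (truncation, P\'olya--Szeg\H{o}, ball computation) turns this into the level-set bound. The only difference is how you justify $\int\nabla W\cdot\nabla\phi=\int(-\Delta W)\phi$: the paper approximates $\min\{W,t\}$ by $C_c^\infty$ functions bounded by $t$, while you use Green's formula on balls and kill the boundary term by averaging over radii with $W\in L^{2n/(n-2)}$, which is equally valid and is the same H\"older estimate hidden in the paper's cutoff step (in your final aside the boundary term is $O(R^{2-n})$, not $O(R^{1-n})$, but it still tends to $0$).
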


\begin{proof}
	For a bounded measurable set $E\subset\mathbb R^n$, we define the following capacity
	$$
	\operatorname{cap}_2(E):=\inf\left\{
	\int_{\mathbb R^n}|\nabla\varphi|^2\,dx:
	\varphi\in D^{1,2}(\mathbb R^n),\
	\varphi\text{ is continuous},\
	\varphi\geq1\text{ on }E
	\right\}.
	$$
	See more information in \cite[Section~11.15, formula~(7)]{LL}.
	
	We claim the following isocapacitary inequality:
	\begin{equation}\label{eq:sharp-newtonian-isocapacitary}
		\operatorname{cap}_2(E) \geq n(n-2)v_n^{\frac{2}{n}}|E|^{\frac{n-2}{n}}.
	\end{equation}
	Indeed, suppose $|E|>0$, let $v_nR^n=|E|$, and take an admissible
	$\varphi$. Set
	$$
	w=\min\{\max\{\varphi,0\},1\}.
	$$
	Then $w=1$ on $E$ and
	$\int|\nabla w|^2\leq\int|\nabla\varphi|^2$.
	By the P\'olya--Szeg\H{o} inequality \cite[Lemma~7.17]{LL}, its rearrangement
	$w^\ast(x)=h(|x|)$ satisfies
	$$
	\int_{\mathbb R^n}|\nabla w^\ast|^2\,dx
	\leq\int_{\mathbb R^n}|\nabla w|^2\,dx,
	\qquad w^\ast=1\text{ almost everywhere on }B_R.
	$$
	Since $w^\ast$ is bounded and has square-integrable gradient,
	radial mollification and polar coordinates give
	$h\in H^1(a,b)$ for every $0<a<b<\infty$. Thus $h$ has a locally absolutely continuous representative
	on $(0,\infty)$, which we continue to denote by $h$.
	This representative is nonnegative and nonincreasing.
	Since $w^\ast=1$ almost everywhere on $B_R$, continuity gives
	$h(R)=1$.
	Moreover, the positive superlevel sets of $w^\ast$ have finite
	measure, so $h(r)\to0$ as $r\to\infty$.
	Consequently,
	$$
	1=\lim_{T\to\infty}\bigl(h(R)-h(T)\bigr)
	=\int_R^\infty-h'(r)\,dr.
	$$
	The Cauchy--Schwarz inequality gives
	$$
	1=\int_R^\infty-h'(r)\,dr
	\leq
	\left(\int_R^\infty|h'(r)|^2r^{n-1}\,dr\right)^{\frac{1}{2}}
	\left(\frac{R^{2-n}}{n-2}\right)^{\frac{1}{2}}.
	$$
	Therefore
	$$
	\int_{\mathbb R^n}|\nabla\varphi|^2\,dx
	\geq n v_n\int_R^\infty|h'(r)|^2r^{n-1}\,dr
	\geq n(n-2)v_nR^{n-2},
	$$
	which proves \eqref{eq:sharp-newtonian-isocapacitary}. The case $|E|=0$
	is immediate.
	
	The continuous function
	$$
	\psi(x)=\min\left\{1,\left(\frac{R}{|x|}\right)^{n-2}\right\}
	$$
    equals $1$ at the origin, belongs to
	$D^{1,2}$ and satisfies
	$$
	\int_{\R^n}|\nabla \psi(x)|^2dx =n(n-2)v_nR^{n-2}.
	$$
	Hence $\operatorname{cap}_2(B_R)=n(n-2)v_nR^{n-2}$, so the inequality is sharp.
	This is the isocapacitary inequality; see also
	\cite[Theorem~11.17]{LL}.
	
	Fix $t>0$ and set $E_t=\{W\geq t\}$. This set is compact.
Since the truncation $\varphi_t=\min\left\{\frac{W}{t},1\right\}$
	is continuous, belongs to $D^{1,2}(\mathbb R^n)$, and equals $1$ on
	$E_t$, we have
	\begin{equation}\label{eq:capacity-by-truncation}
		\operatorname{cap}_2(E_t)
		\leq
		\frac{1}{t^2}
		\int_{\{W<t\}}|\nabla W|^2\,dx.
	\end{equation}
	
	Set $\psi_t:=\min\{W,t\}$. Since
	$\psi_t\in D^{1,2}(\R^n)$ and $0\leq\psi_t\leq t$, cutoff and
	mollification give functions $\psi_k\in C_c^\infty(\R^n)$ such that
	$$
	0\leq\psi_k\leq t,
	\qquad
	\psi_k\to\psi_t\ \text{almost everywhere},
	\qquad
	\nabla\psi_k\to\nabla\psi_t\ \text{in }L^2(\R^n).
	$$
	Since $\nabla W\in L^2(\R^n)$, $-\Delta W\geq 0$ and $\int_{\R^n} -\Delta W d x <\infty$, we obtain
	\begin{align*}
		\int_{\{W<t\}}|\nabla W|^2d x&=\int_{\R^n} \nabla W\cdot \nabla \psi_td x
		=\lim_k\int_{\R^n}\nabla W\cdot\nabla\psi_kd x\\
		&=\lim_k\int_{\R^n}\psi_k(-\Delta W)d x\leq t\int_{\R^n}(-\Delta W)d x.
	\end{align*}
	Here we used the integration by parts and dominated convergence. It follows that
	$$
	\operatorname{cap}_2(E_t)
	\leq
	\frac{1}{t}\int_{\R^n}(-\Delta W)d x
	=
	\frac{n(n-2)v_nM}{t}.
	$$
	
	Combining the upper and lower capacity bounds gives
	$$
	n(n-2)v_n^{\frac{2}{n}}|E_t|^{\frac{n-2}{n}} \leq\frac{n(n-2)v_nM}{t}.
	$$
	Since $\{W>t\}\subset E_t$, this gives
	$$
	n(n-2)v_n^{\frac{2}{n}}|\{W>t\}|^{\frac{n-2}{n}}\leq\frac{n(n-2)v_nM}{t},
	$$
	which proves \eqref{eq:newtonian-level-set}.
\end{proof}

\begin{proof}[Proof of Theorem~\ref{thm:large-s}]
The case $f=0$ is immediate. Since$|I_sf|\leq I_s|f|$ almost everywhere, it suffices to consider
nonnegative $f$.
We first consider $0\leq f\in C_c^\infty(\mathbb R^n)$,
$f\not\equiv0$, and set
$$
 m:=\|f\|_{L^1(\mathbb R^n)},
 \qquad V:=(I_sf)^{\frac{n-2}{n-s}},
 \qquad M:=(\gamma_{n,s}m)^{\frac{n-2}{n-s}}.
$$
By Lemma~\ref{lem:newtonian-power}, $V$ satisfies all the assumptions of
Lemma~\ref{lem:newtonian-level-set}. Applying that lemma with
$t=\lambda^{\frac{n-2}{n-s}}$ gives
$$
 \lambda^{\frac{n-2}{n-s}}
 |\{I_sf>\lambda\}|^{\frac{n-2}{n}}
 \leq v_n^{\frac{n-2}{n}}
 (\gamma_{n,s}m)^{\frac{n-2}{n-s}}.
$$
Raising both sides to the power $\frac{n-s}{n-2}$ yields
$$
 \lambda|\{I_sf>\lambda\}|^{\frac{n-s}{n}}
 \leq
 \gamma_{n,s}v_n^{\frac{n-s}{n}}\|f\|_{L^1(\mathbb R^n)}.
$$

For general $0\leq f\in L^1(\mathbb R^n)$, choose
$0\leq f_j\in C_c^\infty(\mathbb R^n)$ such that
$\|f_j-f\|_{L^1(\mathbb R^n)}\to0$.
For every compact set $K\subset\mathbb R^n$, splitting the integral
into $|x-y|<1$ and $|x-y|\geq1$ gives
$$
 \sup_{y\in\mathbb R^n}\int_K|x-y|^{s-n}\,dx
 \leq\frac{nv_n}{s}+|K|.
$$
Consequently, Tonelli's theorem implies
$$
 \|I_s(f_j-f)\|_{L^1(K)}
 \leq
 \gamma_{n,s}\left(\frac{nv_n}{s}+|K|\right)
 \|f_j-f\|_{L^1(\mathbb R^n)}
 \longrightarrow0.
$$
After passing to a subsequence, we assume that $I_sf_j\to I_sf$ almost
everywhere. For every $\lambda>0$, Fatou's lemma gives
$$
 \lambda|\{I_sf>\lambda\}|^{\frac{n-s}{n}}
 \leq\liminf_{j\to\infty}\lambda|\{I_sf_j>\lambda\}|^{\frac{n-s}{n}}\leq \lim_{j\to\infty}\gamma_{n,s}v_n^{\frac{n-s}{n}}\|f_j\|_{L^1(\mathbb R^n)}=\gamma_{n,s}v_n^{\frac{n-s}{n}}\|f\|_{L^1(\mathbb R^n)}.
$$
Taking the supremum
over $\lambda>0$ proves \eqref{eq:large-s-weak}.
Finally, \eqref{eq:large-s-weak} and \eqref{eq:large-s-limit} give
$$
 \|I_sf\|_{L^{\frac{n}{n-s},\infty}}
 =
 \gamma_{n,s}v_n^{\frac{n-s}{n}}\|f\|_{L^1}
 \qquad(0\leq f\in L^1,\ f\not\equiv0),
$$
which proves the assertions about maximizers and sharpness.
\end{proof}

\section{Appendix}
In this section, we will provide a sharp Sobolev inequality approach by which we can obtain an asymptotically optimal estimate for $\mathcal{C}_{n,s}$.
The equivalence between the sharp Sobolev inequality and the sharp Hardy-Littlewood-Sobolev inequality (the $(L^{\frac{2n}{n+s}},L^{\frac{2n}{n-s}})$ estimate of the Riesz potentials) is classical, see \cite{L}. More recently, it has been observed that these two inequalities are also closely connected in stability theorems; see \cite{Carlen} and \cite{CLT1}. The purpose of this appendix is to add that the sharp Sobolev inequality can be used to derive an asymptotically optimal estimate for weak-type estimates of Riesz potentials, which appears to be new in the literature.

Let $0<s<\min\{2,n\}$, $0\leq f\in L^1(\mathbb{R}^n)\cap C_{c}^{\infty}(\mathbb{R}^n)$ and $v=I_sf$. Then $f=(-\Delta)^{s/2}v$. Denote $v_\lambda=\min\{v,\lambda\}$, $\lambda>0$ and
$E_\lambda=\{I_sf\geq \lambda\}=\{v\geq \lambda\}$. Since $v_{\lambda}=\lambda$ on $E_\lambda$, we have
$$|E_\lambda|\lambda^{\frac{2n}{n-s}}=\int_{E_\lambda}v_\lambda^{\frac{2n}{n-s}}dx\leq \|v_\lambda\|_{L^{\frac{2n}{n-s}}}^{\frac{2n}{n-s}}.$$
The sharp fractional Sobolev inequality states
$$\|v_\lambda\|^2_{L^{\frac{2n}{n-s}}}\leq \mathcal{S}_{n,s}\mathcal{E}_{s}(v_\lambda),$$
where $$\mathcal{S}_{n,s}=\frac{\Gamma(\frac{n-s}{2})}{2^s\pi^{s/2}\Gamma(\frac{n+s}{2})}\left(\frac{\Gamma(n)}{\Gamma(n/2)}\right)^{s/n}.$$
On the other hand,
$$\mathcal{E}_{s}(v_\lambda)\leq \mathcal{E}_{s}(v_\lambda,v)=\langle(-\Delta)^{s/2}v,v_\lambda\rangle=\int_{\mathbb{R}^n}f(x)v_\lambda(x)dx\leq \lambda\|f\|_{L^1}.$$
Thus
$$|E_\lambda|\lambda^{\frac{2n}{n-s}}\leq (\mathcal{S}_{n,s})^{\frac{n}{n-s}}\lambda^{\frac{n}{n-s}}\|f\|_{L^1}^{\frac{n}{n-s}},$$
which implies for $f\in L^1(\mathbb{R}^n)\cap C_{c}^{\infty}(\mathbb{R}^n)$ there holds
$$\lambda|E_\lambda|^{\frac{n-s}{n}}\leq \mathcal{S}_{n,s}\|f\|_{L^1}.$$
By classical approximation approach, we can obtain that
$$\mathcal{C}_{n,s}\leq \mathcal{S}_{n,s}.$$
Since $$\mathcal{S}_{n,s}=2^{s/2}\pi^{-s/2}e^{-s/2}n^{-s/2}(1+O_{s}(\frac{\ln n}{n}))$$
and
$$\gamma_{n,s}v_n^{\frac{n-s}{n}}=\frac{2^{1-s}}{\Gamma(s/2)}\pi^{-s/2}e^{-s/2}n^{-1}(1+O_s(\frac{\ln n}{n})),$$
then
$$\frac{\mathcal{S}_{n,s}}{\gamma_{n,s}v_n^{\frac{n-s}{n}}}\leq C\frac{n^{1-s/2}}{s}$$
for some positive constant $C>0$ independent of $s$ and $n$, which implies
the upper bound $\mathcal{S}_{n,s}$ is asymptotically optimal as $s\to 0^+$ for each fixed $n$, and has the optimal order as $n\to\infty$
for each fixed $0<s<2$. In fact, the explicit formula
for $\mathcal{C}_{n,s}$ gives
\[
\lim_{s\to 0^+}
\frac{\mathcal{C}_{n,s}}{\mathcal{S}_{n,s}}=1
\qquad\text{for each fixed }n\geq1,
\]
and
\[
\lim_{n\to\infty}
\frac{\mathcal{C}_{n,s}}{\mathcal{S}_{n,s}}=2^{-s}
\qquad\text{for each fixed }0<s<2.
\]

\bibliographystyle{amsalpha}

\end{document}